\documentclass[11pt,a4]{article}
\usepackage[colorlinks=true, pdfstartview=FitV, linkcolor=blue, citecolor=blue,
urlcolor=blue]{hyperref}
\usepackage[applemac]{inputenc}
\usepackage{amssymb,amsmath,amsthm,amsfonts,amscd,amstext}
\usepackage{bbm}
\usepackage[mathscr]{eucal}
\usepackage{multirow}
\usepackage{xypic}
\usepackage{array}
\usepackage{graphicx}
\usepackage{url}
\newtheorem{thm}{Theorem}[section]

\newtheorem{prop}[thm]{Proposition}

\newtheorem{rem}{\it Remark\/}
\textwidth = 6.7 in
\textheight = 9 in
\oddsidemargin = 0.0 in
\evensidemargin = 0.0 in
\topmargin = 0.0 in
\headheight = 0.0 in
\headsep = 0.0 in
\parskip = 0.1in
\parindent = 0.0in

\setcounter{equation}{0}





\def\End{\operatorname{End}} 

 

\def\Hom{\operatorname{Hom}} 
\def\id{\operatorname{id}}  




\def\P{{\mathbb P}}  
\def\pr{\operatorname{pr}} 

\def\Q{{\mathbb Q}}   
\def\R{{\mathbb R}}    



\def\N{{\mathbb N}}    



\begin{document}
\title{\bf Operads in Itô calculus}
\author
{Roland Friedrich\thanks{Saarland University, friedrich@math.uni-sb.de. The author is supported by the  ERC advanced grant ``Noncommutative distributions in free probability", held by Roland Speicher. 
}}
\date{\today}
\maketitle
\begin{abstract}
We discuss algebraic and geometric properties of the Itô calculus.
\end{abstract}
\tableofcontents
\section{Introduction}

Let us consider the integration by parts formul{\ae} for the Itô $\bullet$ and the Stratonovich $\circ$ integral applied to the subspace of continuous semi-martingales starting at the origin. We have
\begin{eqnarray*}
X\cdot Y&=&X\bullet Y+Y\bullet X+[X,Y],\\
X\cdot Y&=& X\circ Y+Y\circ X,
\end{eqnarray*}
which gives a splitting of the associative and commutative product $\cdot$ into three and two operations, respectively.
Further, the Stratonovich integral is expressible as a $\Q$-linear combination of the operations which we obtain in the splitting of the Itô integral:
\begin{equation*}
X\circ Y=X\bullet Y+\frac{1}{2}[X,Y].
\end{equation*}
To start with, let us consider the associativity condition $(X\cdot Y)\cdot Z=X\cdot(Y\cdot Z)$ for the Itô integral:
\begin{eqnarray*}
(X\cdot Y)\cdot Z &=&\underbrace{(X\bullet Y)\bullet Z}_{1}+\underbrace{(Y\bullet X)\bullet Z}_{2}+\underbrace{[X,Y]\bullet Z}_{6}+\\
&& \underbrace{Z\bullet(X\bullet Y)+Z\bullet(Y\bullet X)+Z\bullet[X,Y]}_{3}+\\
&&\underbrace{[X\bullet Y,Z]}_{5}+\underbrace{[Y\bullet X,Z]}_{4}+\underbrace{[[X,Y],Z]}_{7}
\end{eqnarray*}
\begin{eqnarray*}
X\cdot (Y\cdot Z) &=&\underbrace{X\bullet (Y\bullet Z)+X\bullet(Z\bullet Y)+X\bullet [Y,Z]}_{1}+\\
&& \underbrace{(Y\bullet Z) \bullet X}_{2}+\underbrace{(Z\bullet Y)\bullet X}_{3}+\underbrace{[Y,Z]\bullet X}_{4}+\\
&&\underbrace{[X,Y\bullet Z]}_{6}+\underbrace{[X,Z\bullet Y]}_{5}+\underbrace{[X,[Y,Z]]}_{7}
\end{eqnarray*}
By ``equating" the expressions with the corresponding numbers and appropriately replacing the operator symbols, one recognises the defining relations of Loday and Ronco's  {\bf tridendriform algebra}~\cite{LR04}, as listed e.g. in~[\cite{LV}, p. 531]

Similarly, for the Stratonovich integral, we deduce from the associativity condition $(X\cdot Y)\cdot Z=X\cdot(Y\cdot Z)$ the following relations:
\begin{eqnarray*}
(X\cdot Y)\cdot Z&=&\underbrace{(X\circ Y)\circ Z}_{1}+\underbrace{(Y\circ X)\circ Z}_{2}+\underbrace{Z\circ(X\circ Y)+Z\circ(Y\circ X)}_{3}\\
X\cdot(Y\cdot Z)&=&\underbrace{X\circ(Y\circ Z)+X\circ(Z\circ Y)}_{1}+\underbrace{(Y\circ Z)\circ X}_{2}+\underbrace{(Z\circ Y)\circ X}_{3}
\end{eqnarray*}
Again, by ``equating" the expressions $1,2,3$ and replacing the operator symbols, one recognises the relations of Loday's  {\bf dendriform algebra}~\cite{L95}. 

To keep this note concise we do not further discuss related topics such as, e.g. Rota-Baxter operators here, nor do we introduce natural generalisations of the material presented, which we shall do in~\cite{F}.  
\section{Categorical preliminaries}
Topological spaces and measurable spaces have many properties in common. In order to illustrate this, we list the corresponding notions, as compiled in, e.g.~\cite{W}, for the category of topological spaces. For some of the measure theoretical notions, cf.~\cite{E}.

\begin{prop} The class of all measurable spaces forms a category $\mathbf{Meas}$, with objects the measurable spaces and morphisms the measurable maps.
\end{prop}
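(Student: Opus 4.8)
The plan is to check the three data and three axioms that constitute a category, taking the objects to be pairs $(X,\mathcal{A})$ consisting of a set $X$ and a $\sigma$-algebra $\mathcal{A}$ on $X$, and the morphisms $(X,\mathcal{A})\to(Y,\mathcal{B})$ to be the measurable maps, i.e.\ those set maps $f\colon X\to Y$ with $f^{-1}(B)\in\mathcal{A}$ for every $B\in\mathcal{B}$. Concretely I must exhibit a composition law and verify that it is well defined (a composite of measurable maps is again measurable), associative, and unital.

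First I would record the only genuinely structural step, the well-definedness of composition. Given measurable maps $f\colon(X,\mathcal{A})\to(Y,\mathcal{B})$ and $g\colon(Y,\mathcal{B})\to(Z,\mathcal{C})$, I take the composite to be the underlying set-theoretic composite $g\circ f$ and invoke the preimage identity
\begin{equation*}
(g\circ f)^{-1}(C)=f^{-1}\bigl(g^{-1}(C)\bigr),\qquad C\in\mathcal{C}.
\end{equation*}
Measurability of $g$ gives $g^{-1}(C)\in\mathcal{B}$, and then measurability of $f$ gives $f^{-1}\bigl(g^{-1}(C)\bigr)\in\mathcal{A}$; hence $g\circ f$ is measurable, so composition does not leave the class of morphisms.

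The remaining axioms I would inherit from the category $\mathbf{Set}$. Associativity $(h\circ g)\circ f=h\circ(g\circ f)$ holds already at the level of set maps, and since measurability has just been shown to be stable under composition, both sides name the same morphism of $\mathbf{Meas}$. For the unit, I would observe that for every object $(X,\mathcal{A})$ the identity map satisfies $\mathrm{id}_X^{-1}(A)=A\in\mathcal{A}$ for all $A\in\mathcal{A}$, so $\mathrm{id}_X$ is a morphism; the identity laws $f\circ\mathrm{id}_X=f=\mathrm{id}_Y\circ f$ then descend from $\mathbf{Set}$ as well.

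The point to keep in mind---rather than a genuine obstacle---is twofold. The substantive content of the statement is exactly the composition-closure step above, which rests on the contravariant functoriality of the preimage operation; everything else is formal transport from $\mathbf{Set}$ along the faithful forgetful functor $\mathbf{Meas}\to\mathbf{Set}$. Secondly, one should note a size issue: the measurable spaces form a proper class, so $\mathbf{Meas}$ is a large (but locally small) category, which is precisely why the statement speaks of the \emph{class} of all measurable spaces; this is harmless and requires no set-theoretic fix beyond the usual universe conventions.
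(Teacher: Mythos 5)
Your proof is correct. The paper actually states this proposition without any proof at all, treating it as a standard fact, so your argument simply supplies the canonical verification that the paper leaves implicit: the essential point is closure of measurable maps under composition via the preimage identity $(g\circ f)^{-1}(C)=f^{-1}\bigl(g^{-1}(C)\bigr)$, with associativity and the identity laws inherited from $\mathbf{Set}$, and measurability of $\mathrm{id}_X$ checked directly. Your closing remarks on the faithful forgetful functor and the size issue are accurate and consistent with how the paper subsequently treats $U\colon\mathbf{Meas}\rightarrow\mathbf{Set}$ and its fibres.
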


The forgetful functor $U:\mathbf{Meas}\rightarrow\mathbf{Set}$, 
is given by assigning to every measurable space the underlying set and to every measurable map the corresponding set-theoretic function.

Further, for the forgetful functor $U$ both the left $L$ and right adjoint $R$ exist. So, the {\bf left adjoint}
$$
L:\mathbf{Set}
\rightarrow\mathbf{Meas}
$$
endows a non-empty set with the {\bf powerset $\sigma$-algebra}, i.e. $\mathfrak{P}(\Omega)$ and  the {\bf right adjoint}
$$
R:\mathbf{Set}
\rightarrow\mathbf{Meas}
$$
endows every set $\Omega$ with the {\bf trivial} or {\bf indiscrete} $\sigma$-algebra, i.e. $\{\emptyset,\Omega\}$.
\begin{prop}
The functors $L$ and $R$, as defined above, satisfy 
$$
UL=UR=\id_{\mathbf{Set}},
$$ i.e. they are right inverses to $U$. Further, both functors $L$ and $R$ fully embed $\mathbf{Set}$ into $\mathbf{Meas}$.
\end{prop}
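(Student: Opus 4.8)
The plan is to verify both assertions more or less directly from the definitions, the only genuine input being that the powerset and indiscrete $\sigma$-algebras are the two extreme points of the lattice of $\sigma$-algebras on a set. First I would dispose of the identities $UL=UR=\id_{\mathbf{Set}}$. On objects these are immediate, since $U$ merely forgets the $\sigma$-algebra: $UL(\Omega)=U(\Omega,\mathfrak P(\Omega))=\Omega$ and $UR(\Omega)=U(\Omega,\{\emptyset,\Omega\})=\Omega$. Before treating morphisms one must confirm that $L$ and $R$ are functors at all, i.e.\ that they carry a set map $f\colon\Omega_1\to\Omega_2$ to an actual measurable map. For $L$ this is forced by the domain carrying $\mathfrak P(\Omega_1)$: any preimage $f^{-1}(B)$ is a subset of $\Omega_1$ and hence measurable. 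For $R$ it is forced by the codomain carrying $\{\emptyset,\Omega_2\}$: the preimages of the only two measurable sets are $\emptyset$ and $\Omega_1$, both measurable. As the underlying function is left untouched, $U(L(f))=f=U(R(f))$, which finishes the identities on morphisms and exhibits $L,R$ as right inverses of $U$.

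For the embedding claim I would show that each of $L,R$ is injective on objects, faithful, and full. Injectivity on objects is clear, since distinct sets produce measurable spaces with distinct underlying sets. Faithfulness is then formal: a functor whose composite with $U$ is faithful is itself faithful, and $UL=UR=\id_{\mathbf{Set}}$ is faithful, so $L(f)=L(g)$ gives $f=UL(f)=UL(g)=g$, and likewise for $R$. The one step deserving attention is fullness, and this is exactly where the extremal $\sigma$-algebras earn their keep. A morphism $L\Omega_1\to L\Omega_2$ is a measurable map $(\Omega_1,\mathfrak P(\Omega_1))\to(\Omega_2,\mathfrak P(\Omega_2))$, but measurability is automatic because every subset of $\Omega_1$ lies in $\mathfrak P(\Omega_1)$; a morphism $R\Omega_1\to R\Omega_2$ is a measurable map $(\Omega_1,\{\emptyset,\Omega_1\})\to(\Omega_2,\{\emptyset,\Omega_2\})$, and measurability is again automatic, this time because the codomain carries only the two trivial measurable sets. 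In both cases the set of morphisms coincides, as a set of functions, with $\Hom_{\mathbf{Set}}(\Omega_1,\Omega_2)$, on which $L$ and $R$ act as the identity; hence they are bijective on Hom-sets, giving fullness (and re-deriving faithfulness).

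I do not expect a real obstacle here: the statement is a bookkeeping matter, and the single delicate point is the well-definedness of $L$ and $R$, where the two opposite extremal $\sigma$-algebras render the measurability requirement vacuous on opposite sides---on the domain for $L$, on the codomain for $R$. As a conceptual check I would also note the standard adjunction criterion: for $L\dashv U$ the unit $\eta\colon\id_{\mathbf{Set}}\to UL$ is the identity natural transformation, hence an isomorphism, which is precisely equivalent to $L$ being fully faithful; dually, for $U\dashv R$ the counit $\epsilon\colon UR\to\id_{\mathbf{Set}}$ is the identity, equivalent to $R$ being fully faithful. This reproduces the embedding statement abstractly and corroborates the direct argument.
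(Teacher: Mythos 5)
Your proof is correct and rests on exactly the same observation as the paper's own (one-sentence) proof: measurability is automatic for maps between spaces carrying the powerset $\sigma$-algebra (vacuous on the domain side) or the indiscrete $\sigma$-algebra (vacuous on the codomain side), which makes the hom-sets literally coincide with those of $\mathbf{Set}$ and yields fullness, while $UL=UR=\id_{\mathbf{Set}}$ is immediate. You simply spell out the functoriality, faithfulness, and adjunction-theoretic corroboration that the paper leaves implicit.
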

\begin{proof}
As every set-theoretic function between spaces endowed with the discrete or indiscrete $\sigma$-algebra is measurable, both functors give full embeddings of $\mathbf{Set}$ into $\mathbf{Meas}$.
\end{proof}

The category of all measurable structures on a fixed set $X$, i.e. all possible $\sigma$-algebras, is called the {\bf fibre of the functor $U$ above X}. It has the structure of a {\bf complete lattice} when given the order defined by inclusion. The {\bf greatest element} in the fibre $U^{-1}(X)$ is $\mathcal{P}(X)$, the powerset $\sigma$-algebra on $X$ and the {\bf least element} is the trivial / indiscrete $\sigma$-algebra $\{\emptyset,X\}$.
Therefore we have 
\begin{prop}
The category $\mathbf{Meas}$ is {\bf fibre-complete}. 
\end{prop}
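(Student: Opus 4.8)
The plan is to verify directly that for every set $X$ the fibre $U^{-1}(X)$, consisting of all $\sigma$-algebras on $X$ ordered by inclusion, is a complete lattice; fibre-completeness of $\mathbf{Meas}$ is then immediate from the definition, since $X$ is arbitrary. First I would note that this poset is genuinely small: every $\sigma$-algebra on $X$ is a subset of $\mathcal{P}(X)$, so $U^{-1}(X)$ sits inside $\mathcal{P}(\mathcal{P}(X))$ and no size issues arise.

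Next I would establish that arbitrary meets exist. Given any family $\{\mathcal{A}_i\}_{i\in I}$ of $\sigma$-algebras on $X$, I would show that the intersection $\bigcap_{i\in I}\mathcal{A}_i$ is again a $\sigma$-algebra by checking the three defining closure properties pointwise: it contains $X$ (hence $\emptyset$), it is stable under complementation, and it is stable under countable unions, each property holding in $\bigcap_{i\in I}\mathcal{A}_i$ precisely because it holds in every $\mathcal{A}_i$. Since the intersection is contained in each $\mathcal{A}_i$ and contains every $\sigma$-algebra contained in all the $\mathcal{A}_i$, it is exactly the infimum of the family in $(U^{-1}(X),\subseteq)$. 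The empty meet returns the top element $\mathcal{P}(X)$, already identified above as the greatest element of the fibre.

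For the joins I would invoke the standard lattice-theoretic principle that a poset possessing a greatest element and all meets automatically possesses all joins: the supremum of $\{\mathcal{A}_i\}_{i\in I}$ is the meet of the set of all its upper bounds, which is nonempty since it contains $\mathcal{P}(X)$. Concretely this supremum is the $\sigma$-algebra $\sigma\!\left(\bigcup_{i\in I}\mathcal{A}_i\right)$ generated by the union, i.e. the intersection of all $\sigma$-algebras containing $\bigcup_{i\in I}\mathcal{A}_i$. The empty join returns the bottom element $\{\emptyset,X\}$. Having exhibited arbitrary infima and suprema, I conclude that $U^{-1}(X)$ is a complete lattice for every $X$.

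The one point requiring care — and the only place where a naive argument fails — is the join: the set-theoretic union of $\sigma$-algebras is in general not a $\sigma$-algebra, so the supremum cannot be taken to be the union itself but must be the $\sigma$-algebra it generates. I expect this to be the only genuine obstacle; once it is recognised, both halves of the completeness statement reduce to the routine closure verifications sketched above, and the remaining lattice-theoretic bookkeeping (least upper bounds, greatest lower bounds, and the two extremal empty cases) is purely formal.
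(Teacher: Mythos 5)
Your proposal is correct and follows the same route the paper takes: the paper simply asserts (in the paragraph preceding the proposition) that each fibre $U^{-1}(X)$ is a complete lattice under inclusion, with top $\mathcal{P}(X)$ and bottom $\{\emptyset,X\}$, and concludes fibre-completeness. You supply exactly the verification the paper leaves implicit — meets as intersections of $\sigma$-algebras, joins via the generated $\sigma$-algebra $\sigma\bigl(\bigcup_{i\in I}\mathcal{A}_i\bigr)$ — including the correct warning that unions of $\sigma$-algebras need not be $\sigma$-algebras.
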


For $X,Y\in\mathbf{Set}$, the equaliser of two functions (morphisms) $f,g:X\rightarrow Y$ is given by
$$
\operatorname{Eq}(f,g):=\{x\in X | f(x)=g(x)\}=:\{f=g\}
$$
The co-equaliser for $f,g\in\Hom_{\mathbf{Set}}(X,Y)$ is given by the quotient of $Y$ by the equivalence relation $\sim$ generated by the relations $f(x)\sim g(x)$ for all $x\in X$, i.e.
$$
\operatorname{co-Eq}(f,g):=Y/_{\sim}=:\{f\sim g\}
$$

Analogously to $\mathbf{Top}$, we have
\begin{thm}
The category $\mathbf{Meas}$ has the following properties:
\begin{enumerate}
\item The {\bf product} in $\mathbf{Meas}$ is given by the product $\sigma$-algebra on the set-theoretic cartesian product.
\item The {\bf co-product} is given by the disjoint union of the measurable spaces and with the disjoint union $\sigma$-algebra.
\item The {\bf final object} is $(\star,\{\emptyset, \star\})$, i.e. the one-point set with the trivial $\sigma$-algebra.
\item The {\bf equaliser} of two morphisms is given by the set-theoretic equaliser and the {\bf trace $\sigma$-algebra}.
\item The {\bf co-equaliser} is given by the set-theoretic co-equaliser and the quotient $\sigma$-algebra $\mathcal{F}_{Y/_{\sim}}:=\{A\subset Y/_{\sim}|\pi^{-1}(A)\in\mathcal{F}_Y\}$, where $\pi:Y\rightarrow Y/_{\sim}$ is the canonical projection.
\item {\bf Direct limits} and {\bf inverse limits} are the set-theoretic limits with the {\bf final} and {\bf initial $\sigma$-algebra}, respectively.  
\end{enumerate}
\end{thm}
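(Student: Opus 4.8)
The plan is to treat all six items uniformly through the notions of \emph{initial} and \emph{final} $\sigma$-algebra, exactly mirroring the initial and final topologies used for the corresponding statements in $\mathbf{Top}$. First I would record the basic construction. Given a set $X$ together with a family of maps $f_i\colon X\to(Y_i,\mathcal{F}_i)$ into measurable spaces, the \emph{initial $\sigma$-algebra} on $X$ is the smallest $\sigma$-algebra making every $f_i$ measurable, namely the one generated by $\bigcup_i\{f_i^{-1}(A):A\in\mathcal{F}_i\}$; its universal property is that for any measurable space $Z$ a map $h\colon Z\to X$ is measurable if and only if each composite $f_i\circ h$ is. Dually, given maps $g_i\colon(X_i,\mathcal{F}_i)\to Y$ out of measurable spaces into a set $Y$, the collection $\{A\subset Y:g_i^{-1}(A)\in\mathcal{F}_i\text{ for all }i\}$ is a $\sigma$-algebra, the \emph{final $\sigma$-algebra}; it is the largest making every $g_i$ measurable, and a map $h\colon Y\to Z$ is measurable if and only if each $h\circ g_i$ is.

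Then each item falls out as a special case. For the \textbf{product} one takes the initial $\sigma$-algebra with respect to the projections $\pi_i$; since this is generated by the cylinders $\pi_i^{-1}(A_i)$ it coincides with the product $\sigma$-algebra, and the universal property above is precisely the defining property of the categorical product. For the \textbf{co-product} one takes the final $\sigma$-algebra with respect to the inclusions, which unwinds to the disjoint-union $\sigma$-algebra $\{A:A\cap X_i\in\mathcal{F}_i\}$. The \textbf{final object} is immediate, since a one-point set admits only the $\sigma$-algebra $\{\emptyset,\star\}$ and the unique set map into it is vacuously measurable. The \textbf{equaliser} is the inclusion $E\hookrightarrow X$ of $E=\{f=g\}$ carrying the trace (= initial, for the single inclusion) $\sigma$-algebra, and the \textbf{co-equaliser} is the projection $\pi\colon Y\to Y/_{\sim}$ with the final (= quotient) $\sigma$-algebra, which is exactly the formula for $\mathcal{F}_{Y/_{\sim}}$ stated. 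Finally, \textbf{inverse limits} are constructed from products and equalisers and so carry the initial $\sigma$-algebra of the limiting cone, while \textbf{direct limits} are built from co-products and co-equalisers and carry the final one; alternatively one verifies the initial/final construction directly on the limit cone.

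The calculations are routine, so the only genuine content lies in two places. First, the well-definedness of the final $\sigma$-algebra: one must check that $\{A\subset Y:g_i^{-1}(A)\in\mathcal{F}_i\}$ is closed under complements and countable unions, which holds because preimages commute with these set operations and each $\mathcal{F}_i$ is a $\sigma$-algebra. Second, the mediating-morphism verifications. I expect the \textbf{co-equaliser} to require the most care: after equipping $Y/_{\sim}$ with $\mathcal{F}_{Y/_{\sim}}$ one must confirm that any measurable $h\colon Y\to Z$ that is constant on $\sim$-classes descends to a \emph{measurable} map $\bar{h}\colon Y/_{\sim}\to Z$, and that this $\bar{h}$ is unique. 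This is exactly the universal property of the final $\sigma$-algebra applied to the single map $\pi$, so once the initial/final framework of the first paragraph is in place the obstacle dissolves; the remaining effort is a sequence of straightforward diagram chases, each identical in shape to its $\mathbf{Top}$ counterpart.
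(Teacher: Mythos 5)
Your proposal is correct, and it actually establishes more than the paper's own proof does. The paper verifies only item~1, and only for a binary product: it checks by hand that the tuple map $h=(f,g)$ pulls back rectangles, $h^{-1}(A_1\times A_2)=f^{-1}(A_1)\cap g^{-1}(A_2)\in\mathcal{A}$, and then invokes the fact that rectangles generate $\mathcal{F}_1\otimes\mathcal{F}_2$; items 2--6 are left entirely to the reader with no indication of the argument. You instead set up the initial/final $\sigma$-algebra formalism once, prove its two universal properties, and let all six items fall out as instances: products, equalisers and inverse limits as initial structures, coproducts, coequalisers and direct limits as final ones. This is in effect the observation that the forgetful functor $U:\mathbf{Meas}\rightarrow\mathbf{Set}$ is topological --- something the paper asserts separately as ``fibre-completeness'' but never uses in this proof. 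Your route buys uniformity and completeness (arbitrary, not just binary, products; the colimit items the paper omits; a clean isolation of the two genuinely nontrivial checks, namely that the final $\sigma$-algebra is a $\sigma$-algebra and that the descended map $\bar{h}$ out of a coequaliser is measurable). The paper's route, for the single case it treats, is more concrete, though its generation-by-rectangles step is exactly the ``good sets'' argument that underlies your initial-$\sigma$-algebra universal property, instantiated by hand for two factors.
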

\begin{proof}
For two measurable spaces $(\Omega_1,\mathcal{F}_1)$ and $(\Omega_2,\mathcal{F}_2)$, the measurable space  $(\Omega_1\times\Omega_2,\mathcal{F}_1\otimes\mathcal{F}_2)$ is their categorical product. Namely, let $C:=(\Omega,\mathcal{A})$ be a measurable space and $f:C\rightarrow A$ and $g:C\rightarrow B$ two morphisms, i.e. measurable maps. For $x\in\Omega$, let
\begin{equation*}
h(x):=(f(x),g(x))
\end{equation*}
be the unique set-theoretic map which is also measurable. Then for $A_i\in\mathcal{F}_i$, $i=1,2$ we have $h^{-1}(A_1\times A_2)=f^{-1}(A_1)\cap g^{-1}(A_2)\in\mathcal{A}$ as $f$ and $g$ are both $\mathcal{F}_i-\mathcal{A}$-measurable. Since, the rectangles $A_1\times A_2$ generate the product $\sigma$-algebra, the statements follows. 
\end{proof}
Finally, the ``Factorisation Lemma", cf. e.g.~[\cite{HT}~Satz~1.2~i)], is an example of a push-out in $\mathbf{Meas}$ which holds in a special but important set-up. 
\section{Itô calculus}
\subsection{Continuous semi-martingales}
We recall from~\cite{HT,IW} the general set-up and main results which have originally been introduced by K.~Itô~\cite{I}. We fix $(\Omega,\mathcal{F},(\mathcal{F})_{t\in\R_+},\P)$ a filtered probability space and consider the following vector spaces of real-valued stochastic processes defined on it:
\begin{eqnarray*}
\mathcal{B}&:=&\{X| \text{$(\mathcal{F}_t)_{t\in\R_+}$-adapted, left continuous, bounded on $[0,t]$ $\forall t\in\R_+$ (locally bounded) }\}\\
\mathcal{C}&:=&\{X\in\mathcal{B}|\text{$X$ path-wise continuous}\}\\
\mathcal{A}_{++}&:=&\{X\in\mathcal{C}  |\text{$X_0=0, X_t$ is strictly increasing and $\lim_{t\to\infty}X_t=+\infty$, a.s.}\}\\
\mathcal{A}&:=&\{X\in\mathcal{C}|\text{$X$ path-wise of bounded local variation}\}\\
\mathcal{M}&:=&\{ X\in\mathcal{C} | \text{$X-X_0\equiv(X_t-X_0)_{t\in\R_+}$ is a local martingale} \}
\end{eqnarray*} 
The space of {\bf continuous  semi-martingales} is 
$\mathcal{S}:=\mathcal{A}+\mathcal{M}$. Further, the following chain of inclusions holds: $\mathcal{A}\cup\mathcal{M}\subset\mathcal{S}\subset\mathcal{C}\subset\mathcal{B}$. 

There is a sequence of subspaces $\mathcal{B}_0\subset\mathcal{B},\dots,\mathcal{S}_0\subset\mathcal{S}$ consisting of the respective stochastic processes $X$ with $X_0=0$. 

We have $\mathcal{M}_0\cap\mathcal{A}=\{0\}$ and the fundamental {\bf Doob-Meyer} decomposition
$$
S=\mathcal{M}_0\oplus\mathcal{A}.
$$ 

For $s\leq t$ let $\mathcal{Z}_n[s,t]:s=t_0<\dots<t_n=t$ be a partition of the interval $[s,t]$ and for $[0,t]$ we shall simply write $\mathcal{Z}_n(t)$ or just $\mathcal{Z}_n$. For $X,Y\in\mathcal{C}$, the {\bf quadratic covariation} (with respect to $\mathcal{Z}_n$) is given by 
$$
[X,Y]_{\mathcal{Z}_n}:=\sum_{j=1}^n(X_{t_j}-X_{t_{j-1}})(Y_{t_j}-Y_{t_{j-1}})\qquad (\text{$:=0$ for $t=0$}).  
$$
Then $[X,Y]_{\mathcal{Z}_n}:\Omega\rightarrow\R$ and the limiting process satisfies $[X,Y]\in\mathcal{A}_0$.
\subsection{The stochastic $\hbar$-integral}
We recall the notion of a parameter dependent stochastic integral, whose construction we present in slightly generalised form, cf.~[\cite{KP}, p.~99 ff.].
 
For $t\geq0$ consider a sequence of partitions $\mathcal{Z}_n:0=t^{(n)}_0<t^{(n)}_1<\dots<t^{(n)}_n=t$, for which the {\bf mesh size} $|\mathcal{Z}_n|$ tends to zero, i.e.
$$
\lim_{n\to\infty}|\mathcal{Z}_n|=\lim_{n\to\infty}\max_{1\leq j\leq n}\left(t^{(n)}_{j}-t^{(n)}_{j-1}\right)=0.
$$

For a fixed $\hbar\in[0,1]$, $X,Y\in\mathcal{S}_0$ (just for simplicity now) and $t\in\R_+$, we define path-wise a sequence of random variables $(S_n(t))_{n\in\N^*}$, given by
\begin{equation*}
S_n(X,Y,t,\omega):=\sum_{j=1}^{n}\left((1-\hbar)X(t^{(n)}_{j-1},\omega)+\hbar X(t^{(n)}_{j},\omega)\right)\cdot\left(Y(t^{(n)}_j,\omega)-Y(t^{(n)}_{j-1},\omega)\right).
\end{equation*}
The {\bf stochastic $\hbar$-integral} is then given by
\begin{equation}
\label{lambda_int}
(\hbar)\int_0^{t}X(s,\omega)dY_s(\omega):=\lim_{n\to\infty}S_n(X,Y,t,\omega).
\end{equation}
For $\hbar=0$, the expression~(\ref{lambda_int}) is called the {\bf Itô integral}, and for the symmetric case $\hbar=1/2$, the {\bf Fisk-Stratonovich integral}. Let us use the following notation
\begin{equation}
X\bullet_{\hbar} Y:=\int_0^tX\bullet_{\hbar}dY:=(\hbar)\int_0^{t}X(s)dY_s,
\end{equation}
with the (standard) notational conventions $\bullet:=\bullet_0$ for the Itô and $\bullet_{1/2}$ for the Fisk-Stratonovich integral, respectively. In the probability literature,  $\circ$ is often used to denote Stratonovich integration.
\begin{prop}
The {\bf $\hbar$-stochastic integral} has the following representation in terms of the Itô integral $\bullet$ and the quadratic covariation $[,]$:
\begin{equation}
X\bullet_{\hbar} Y=X \bullet Y+\hbar[X,Y].
\end{equation}
\end{prop}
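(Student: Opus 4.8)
The plan is to reduce the asserted identity to a purely algebraic rearrangement of the weighting coefficient inside the approximating Riemann sums, and then to pass to the limit termwise. First I would rewrite the convex combination appearing in $S_n(X,Y,t,\omega)$ as
\begin{equation*}
(1-\hbar)X_{t_{j-1}}+\hbar X_{t_j}=X_{t_{j-1}}+\hbar\left(X_{t_j}-X_{t_{j-1}}\right),
\end{equation*}
which is valid for every index $j$ and every $\hbar\in[0,1]$. Substituting this into the definition of $S_n$ and distributing the increment $(Y_{t_j}-Y_{t_{j-1}})$ over the two resulting summands splits the approximating sum as
\begin{equation*}
S_n(X,Y,t,\omega)=\sum_{j=1}^{n}X_{t_{j-1}}\left(Y_{t_j}-Y_{t_{j-1}}\right)+\hbar\sum_{j=1}^{n}\left(X_{t_j}-X_{t_{j-1}}\right)\left(Y_{t_j}-Y_{t_{j-1}}\right).
\end{equation*}

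The next step is to identify the two pieces with objects already introduced. The first sum is precisely the approximating sum for the parameter value $\hbar=0$, whose limit is by definition the It\^o integral $X\bullet Y$; the second sum is exactly $\hbar$ times the quadratic covariation $[X,Y]_{\mathcal{Z}_n}$ from the previous subsection, whose limit is $\hbar[X,Y]$. Letting $n\to\infty$ and using that the limit of a sum equals the sum of the limits, whenever the individual limits exist, then yields
\begin{equation*}
X\bullet_{\hbar}Y=X\bullet Y+\hbar[X,Y],
\end{equation*}
as claimed. I expect no computational difficulty here: the whole content is the elementary decomposition above together with recognising the two standard limits.

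The only point genuinely requiring care — and hence the main obstacle — is the justification of the termwise passage to the limit, i.e. that $\lim_n(A_n+B_n)=\lim_n A_n+\lim_n B_n$ holds in the relevant topology. This is unproblematic for $X,Y\in\mathcal{S}_0$, since both constituent limits are already known to exist: convergence of the first sums is the standard existence statement for the It\^o integral on continuous semi-martingales, while convergence of the covariation sums to $[X,Y]\in\mathcal{A}_0$ was recorded above. What I would make explicit is that all three limits are to be taken in one and the same mode of convergence (path-wise along a suitable sequence of partitions, or uniformly on compacts in probability, in accordance with the convention fixed in the definition of the $\hbar$-integral), so that the elementary limit law applies verbatim; once the ambient topology is pinned down, the conclusion is immediate.
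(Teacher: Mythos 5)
Your proposal is correct and is essentially the paper's own argument: both rest on the elementary rearrangement $(1-\hbar)X_{t_{j-1}}+\hbar X_{t_j}=X_{t_{j-1}}+\hbar(X_{t_j}-X_{t_{j-1}})$ inside $S_n$, followed by recognising the two resulting sums as the approximating sums for $X\bullet Y$ and $\hbar[X,Y]$ and passing to the limit. If anything, you are more careful than the paper, which hides the termwise limit passage behind a quoted equality sign, whereas you make explicit that both constituent limits exist and must be taken in the same mode of convergence.
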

\begin{proof} The claim follows from the following calculation:
\begin{eqnarray*}
S_n(X,Y,t,\omega)&=&(1-\hbar)\sum_{j=1}^{n}X(t^{(n)}_{j-1},\omega)\cdot\left(Y(t^{(n)}_{j},\omega)-Y(t^{(n)}_{j-1},\omega)\right)\\
&& +\hbar\sum_{j=1}^{n} X(t^{(n)}_{j},\omega)\cdot\left(Y(t^{(n)}_{j},\omega)-Y(t^{(n)}_{j-1},\omega)\right)\\
&``="&(1-\hbar) X\bullet Y+\hbar\sum_{j=1}^{n} X(t^{(n)}_{j-1},\omega)\cdot\left(Y(t^{(n)}_{j},\omega)-Y(t^{(n)}_{j-1},\omega)\right)\\
&&+\hbar\sum_{j=1}^{n}\left(X(t^{(n)}_{j},\omega)-X(t^{(n)}_{j-1},\omega)\right)\cdot\left(Y(t^{(n)}_{j},\omega)-Y(t^{(n)}_{j-1},\omega)\right)
\end{eqnarray*}
\end{proof}
Let us recall some basic properties of the Itô calculus which we shall need most often, cf. e.g.~[\cite{HT}, Chapter~4]: the following properties hold:
\begin{eqnarray} 
\bullet&:&\mathcal{S}\otimes_{\R}\mathcal{S}\rightarrow\mathcal{S}_0\quad\text{(bilinear)}\\
{[~,~]}&:&\mathcal{S}\otimes_{\R}\mathcal{S}\rightarrow\mathcal{A}_0\quad\text{(bilinear and symmetric)}
\end{eqnarray}
and for $F,G,X,Y\in\mathcal{S}$, one has:
\begin{eqnarray}
F\bullet (G\bullet X)&=&(F\cdot G)\bullet X,\\
F\bullet[X,Y]&=&[F\bullet X,Y]=[X,F\bullet Y].
\end{eqnarray}
\begin{prop}
\label{h-associative}
For  $\hbar\in[0,1]$ and $X,Y,Z\in\mathcal{S}$ the following ``associativity" relation holds:
\begin{equation}
X\bullet_{\hbar}(Y\bullet_{\hbar} Z)=(X\cdot Y)\bullet_{\hbar} Z.
\end{equation}
\end{prop}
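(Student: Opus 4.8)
The plan is to express both sides purely in terms of the It� integral $\bullet$ and the quadratic covariation $[~,~]$, using the representation $A\bullet_\hbar B = A\bullet B + \hbar[A,B]$ from the preceding Proposition, and then to reduce everything to a common normal form by means of the four structural identities recalled above. The only additional analytic input I will need is the standard vanishing of the quadratic covariation against a finite-variation process, namely $[C,A]=0$ whenever $A\in\mathcal{A}$; this follows at once from the defining partition sums, since $\bigl|\sum_j (C_{t_j}-C_{t_{j-1}})(A_{t_j}-A_{t_{j-1}})\bigr|$ is bounded by the total variation of $A$ times the maximal oscillation of $C$, which tends to $0$ with the mesh because $C$ is uniformly continuous on $[0,t]$.

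First I would expand the left-hand side. Writing $Y\bullet_\hbar Z = Y\bullet Z + \hbar[Y,Z]$ and applying the representation once more, bilinearity of both $\bullet$ and $[~,~]$ gives
\begin{equation*}
X\bullet_\hbar(Y\bullet_\hbar Z) = X\bullet(Y\bullet Z) + \hbar\,X\bullet[Y,Z] + \hbar\,[X,Y\bullet Z] + \hbar^2\,[X,[Y,Z]].
\end{equation*}
Here the last term vanishes because $[Y,Z]\in\mathcal{A}_0$ has finite variation. The first term becomes $(X\cdot Y)\bullet Z$ by the associativity identity $F\bullet(G\bullet X)=(F\cdot G)\bullet X$; the second becomes $[X\bullet Y,Z]$ by $F\bullet[X,Y]=[F\bullet X,Y]$; and the third, using symmetry of $[~,~]$ together with the same rule, rewrites as $[X,Y\bullet Z]=[Y\bullet Z,X]=Y\bullet[Z,X]=Y\bullet[X,Z]$.

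Next I would expand the right-hand side. Here only a single $\bullet_\hbar$ occurs, so $(X\cdot Y)\bullet_\hbar Z = (X\cdot Y)\bullet Z + \hbar[X\cdot Y,Z]$, and the point is to compute the covariation $[X\cdot Y,Z]$. Substituting the integration-by-parts formula $X\cdot Y = X\bullet Y + Y\bullet X + [X,Y]$ and using bilinearity yields $[X\bullet Y,Z] + [Y\bullet X,Z] + [[X,Y],Z]$; the last term again vanishes since $[X,Y]\in\mathcal{A}_0$, and the middle term equals $Y\bullet[X,Z]$ by the Leibniz-type rule. Comparing the two expansions, both sides reduce to $(X\cdot Y)\bullet Z + \hbar[X\bullet Y,Z] + \hbar\,Y\bullet[X,Z]$, which proves the identity.

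The main obstacle, and the only genuinely non-formal step, is the cancellation of the order-$\hbar^2$ contribution: the left-hand side produces a quadratic term $\hbar^2[X,[Y,Z]]$ that has no counterpart on the right, so the identity can hold only because the covariation of any semi-martingale with a finite-variation process is zero. Everything else is bookkeeping with the bilinear, symmetric and Leibniz properties of $\bullet$ and $[~,~]$.
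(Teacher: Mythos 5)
Your proof is correct and takes essentially the same route as the paper: both expand via the representation $\bullet_{\hbar}=\bullet+\hbar[~,~]$, use the module identity $F\bullet(G\bullet X)=(F\cdot G)\bullet X$ and the Leibniz-type rule $F\bullet[X,Y]=[F\bullet X,Y]=[X,F\bullet Y]$, discard the order-$\hbar^2$ term because covariation against a finite-variation process vanishes, and rest on the identity $[X\cdot Y,Z]=[X\bullet Y,Z]+[Y\bullet X,Z]$, which the paper derives by applying $[\,\cdot\,,Z]$ to integration by parts and which you derive inside your expansion of the right-hand side. The one small nuance is that for general $X,Y\in\mathcal{S}$ the integration-by-parts formula carries an extra $X_0Y_0$ term (the paper writes $X\bullet Y+Y\bullet X=X\cdot Y-X_0Y_0-[X,Y]$); your version omits it, but harmlessly, since $[X_0Y_0,Z]=0$.
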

\begin{proof}
For $X,Y,Z\in\mathcal{S}$, by using integration by parts, we have 
\begin{eqnarray*}
X\bullet Y+Y\bullet X&=&X\cdot Y-X_0Y_0-[X,Y]\qquad |[\cdot,Z]\\
{[X\bullet Y, Z]}+{[Y\bullet X,Z]} &=& [X\cdot Y-X_0Y_0,Z]-[[X,Y],Z]\\
&=&[X\cdot Y,Z]
\end{eqnarray*}
With this we have
\begin{eqnarray*}
X\bullet_{\hbar}(Y\bullet_{\hbar} Z)&=&X\bullet(Y\bullet Z+\hbar[Y,Z])+\hbar[X,Y\bullet Z+\hbar[Y,Z]]\\
&=&(X\cdot Y)\bullet Z+\hbar X\bullet[Y,Z]+\hbar[X,Y\bullet Z]+\hbar^2[X,[Y,Z]]\\
&=&(X\cdot Y)\bullet Z+\hbar([X\bullet Y,Z]+[Y\bullet X,Z])\\
&=&(X\cdot Y)\bullet Z+\hbar[X\cdot Y,Z]\\
&=&(X\cdot Y)\bullet_{\hbar} Z
\end{eqnarray*}
\end{proof}

K. Itô showed~\cite{I}, cf. also~[\cite{IW}, Chapter III], that the space of semi-martingales carries remarkable algebraic structures. 

\begin{thm}[K.Itô~\cite{I}]
\label{Ito_Thm}
The set $\mathcal{S}_I:=(\mathcal{S},\cdot)$ is an associative, commutative $\R$-algebra with respect to point-wise multiplication. The unit is given by the constant process $1_{\mathcal{S}_I}$. The set $(\mathcal{S},[,])$ is an associative and commutative $\mathcal{S}_I$-algebra which is nilpotent of order two.
\end{thm}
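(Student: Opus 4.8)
\emph{Overview.} The plan is to separate the two assertions and to observe that, once the closure and vanishing facts of It\^o calculus are in place, every algebraic axiom is inherited either from the pointwise arithmetic of $\R$ or from the bilinearity relations and the identities $F\bullet(G\bullet X)=(F\cdot G)\bullet X$ and $F\bullet[X,Y]=[F\bullet X,Y]=[X,F\bullet Y]$ recalled above.

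\emph{The $\R$-algebra $(\mathcal{S},\cdot)$.} I would first check that $\mathcal{S}$ is closed under pointwise multiplication, which is the only non-formal point. By integration by parts one has $X\cdot Y=X_0Y_0+X\bullet Y+Y\bullet X+[X,Y]$, whose right-hand side lies in $\mathcal{S}$, since $X\bullet Y,\,Y\bullet X\in\mathcal{S}_0\subset\mathcal{S}$, since $[X,Y]\in\mathcal{A}_0\subset\mathcal{S}$, and since constant processes lie in $\mathcal{A}\subset\mathcal{S}$. Given closure, associativity, commutativity and $\R$-bilinearity of $\cdot$ hold $\omega$-wise and pointwise in $t$, being direct consequences of the corresponding properties of multiplication in $\R$; and the constant process $1_{\mathcal{S}_I}\in\mathcal{A}\subset\mathcal{S}$ satisfies $1_{\mathcal{S}_I}\cdot X=X$, so it is the unit. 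This establishes the first assertion.

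\emph{The $\mathcal{S}_I$-algebra $(\mathcal{S},[\,,\,])$.} For the second statement I would read the recalled identities as the axioms of an $\mathcal{S}_I$-module and of an $\mathcal{S}_I$-algebra. The bracket $[\,,\,]$ is $\R$-bilinear and symmetric with values in $\mathcal{A}_0\subset\mathcal{S}$, hence a commutative $\R$-algebra product on $\mathcal{S}$. Taking $\bullet$ as the action of the ring $\mathcal{S}_I$ on $\mathcal{S}$, the relation $F\bullet(G\bullet X)=(F\cdot G)\bullet X$ is exactly the module associativity axiom (with $F\cdot G$ the product in $\mathcal{S}_I$), and bilinearity of $\bullet$ supplies distributivity; thus $\bullet$ makes $\mathcal{S}$ an $\mathcal{S}_I$-module. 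Compatibility of the bracket with this action is the recalled identity $F\bullet[X,Y]=[F\bullet X,Y]=[X,F\bullet Y]$, i.e. $[\,,\,]$ is $\mathcal{S}_I$-bilinear, so $(\mathcal{S},[\,,\,])$ is an $\mathcal{S}_I$-algebra.

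\emph{Associativity, nilpotency, and the main point.} It remains to treat associativity and nilpotency of the bracket, which I would deduce from the single fact that the quadratic covariation of a finite-variation process with any continuous semi-martingale vanishes, i.e. $[A,W]=0$ for $A\in\mathcal{A}$ and $W\in\mathcal{S}$. Since $[X,Y]\in\mathcal{A}_0\subset\mathcal{A}$ for all $X,Y$, one gets $[[X,Y],Z]=0=[X,[Y,Z]]$, which simultaneously yields associativity $[[X,Y],Z]=[X,[Y,Z]]$ (both sides being zero) and nilpotency of order two (two successive applications of $[\,,\,]$ give zero). The main---indeed only---obstacle is therefore analytic rather than algebraic: everything rests on the closure of $\mathcal{S}$ under pointwise multiplication and on the vanishing $[A,\cdot]=0$ on $\mathcal{A}$, both standard facts of It\^o calculus, while the remaining verifications are formal bookkeeping. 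I would also flag one subtlety worth a remark: since $1\bullet X=X-X_0$, the $\bullet$-action is unital only on the subspace $\mathcal{S}_0$, so on all of $\mathcal{S}$ the module is to be understood as non-unital (or one passes to $\mathcal{S}_0$).
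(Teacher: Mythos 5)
Your proof is correct, but note that the paper itself offers no proof of this theorem: it is stated as It\^o's result with a citation to \cite{I} (cf. also [\cite{IW}, Chapter~III]), so there is no in-paper argument to compare yours against. Your write-up supplies exactly the details such a citation hides, and it does so correctly: closure of $\mathcal{S}$ under pointwise multiplication via integration by parts, $X\cdot Y=X_0Y_0+X\bullet Y+Y\bullet X+[X,Y]$, combined with the mapping properties $\bullet:\mathcal{S}\otimes_{\R}\mathcal{S}\rightarrow\mathcal{S}_0$ and $[\,,\,]:\mathcal{S}\otimes_{\R}\mathcal{S}\rightarrow\mathcal{A}_0$; the identification of the $\mathcal{S}_I$-module structure as the $\bullet$-action rather than pointwise scaling (this is the right reading: $[F\cdot X,Y]\neq F\cdot[X,Y]$ in general, whereas the recalled identity $F\bullet[X,Y]=[F\bullet X,Y]=[X,F\bullet Y]$ is precisely $\mathcal{S}_I$-bilinearity of the bracket for the $\bullet$-action, consistent with the paper's statement that ``he established the module structures over $\mathcal{S}_I$ for both the It\^o and the Stratonovich stochastic integral''); and the reduction of both associativity and order-two nilpotency of the bracket to the single analytic fact $[A,W]=0$ for continuous $A$ of finite variation. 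That vanishing fact is exactly what the paper uses tacitly elsewhere, e.g.\ in the proof of Proposition~\ref{h-associative}, where the step $[X\cdot Y-X_0Y_0,Z]-[[X,Y],Z]=[X\cdot Y,Z]$ requires $[[X,Y],Z]=0$ and $[X_0Y_0,Z]=0$, and in the tridendriform relation $[[X,Y],Z]=[X,[Y,Z]]$, which holds because both sides vanish. Your closing caveat about non-unitality ($1_{\mathcal{S}_I}\bullet X=X-X_0$, so the action is unital only on $\mathcal{S}_0$) is likewise accurate and consistent with the paper's choice to state its module proposition for $(\mathcal{S}_0,\bullet_{\hbar})$ rather than for all of $\mathcal{S}$.
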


Further, he established the module structures over $\mathcal{S}_I$ for both the Itô and the Stratonovich stochastic integral. We note that 
the subspaces $\mathcal{M}_0$ and $\mathcal{A}$ are $\mathcal{S}_I$-submodules. A natural generalisation, which contains both cases, is given in 
\begin{prop}
$(\mathcal{S}_0,\bullet_{\hbar})$ is a left module over the $\R$-algebra $\mathcal{S}_I$.
\end{prop}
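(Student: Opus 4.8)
The plan is to verify the defining axioms of a left module directly. The underlying abelian group is the $\R$-vector space $\mathcal{S}_0$, and the action of $\mathcal{S}_I=(\mathcal{S},\cdot)$ on it is the $\hbar$-integral
\[
\mathcal{S}_I\otimes_{\R}\mathcal{S}_0\rightarrow\mathcal{S}_0,\qquad F\otimes Y\mapsto F\bullet_{\hbar}Y.
\]
First I would confirm that this map is well defined, i.e.\ that it really takes values in $\mathcal{S}_0$. Using the representation $F\bullet_{\hbar}Y=F\bullet Y+\hbar[F,Y]$ established above, together with the facts that $\bullet$ takes values in $\mathcal{S}_0$ and that $[\,,\,]$ takes values in $\mathcal{A}_0\subset\mathcal{S}_0$, one sees that $F\bullet_{\hbar}Y\in\mathcal{S}_0$ for every $F\in\mathcal{S}$ and every $Y\in\mathcal{S}_0$.

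Next I would record bilinearity and associativity. Since both $\bullet$ and $[\,,\,]$ are $\R$-bilinear, so is $\bullet_{\hbar}=\bullet+\hbar[\,,\,]$, which gives additivity and $\R$-homogeneity of the action in each argument. The module associativity $(F\cdot G)\bullet_{\hbar}Y=F\bullet_{\hbar}(G\bullet_{\hbar}Y)$ is exactly the content of Proposition~\ref{h-associative}, applied with third argument $Z=Y$; this is legitimate because $Y\in\mathcal{S}_0\subset\mathcal{S}$ and the inner term $G\bullet_{\hbar}Y$ again lies in $\mathcal{S}_0\subset\mathcal{S}$, so the outer $\hbar$-integral is defined.

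Finally I would check unitality, namely that the algebra unit $1_{\mathcal{S}_I}$, the constant process $1$, acts as the identity. By the representation, $1\bullet_{\hbar}Y=1\bullet Y+\hbar[1,Y]$. The It\^o integral of the integrand $1$ returns the increment, so $1\bullet Y=Y$ because $Y_0=0$, while $[1,Y]=0$ since a constant process has vanishing increments in every quadratic-covariation sum. Hence $1\bullet_{\hbar}Y=Y$, and all four axioms hold. The only real content is Proposition~\ref{h-associative}, which supplies associativity; the remaining steps are immediate from bilinearity and a one-line computation. The one point that deserves care is that the unit of $\mathcal{S}_I$ does \emph{not} lie in the module $\mathcal{S}_0$ (it has $1_0=1\neq0$), so unitality must be read as an external action of the algebra $\mathcal{S}$ on the submodule $\mathcal{S}_0$; once well-definedness of the action into $\mathcal{S}_0$ is secured this causes no trouble.
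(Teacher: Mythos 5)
Your proof is correct and follows essentially the same route as the paper: module associativity is obtained by citing Proposition~\ref{h-associative}, and unitality by the computation $1_{\mathcal{S}_I}\bullet_{\hbar}Y=1_{\mathcal{S}_I}\bullet Y+\hbar[1_{\mathcal{S}_I},Y]=Y$. The extra checks you include (well-definedness of the action into $\mathcal{S}_0$ and bilinearity) are left implicit in the paper but are accurate and consistent with its stated properties of $\bullet$ and $[\,,\,]$.
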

\begin{proof}
$X\bullet_{\hbar}(Y\bullet_{\hbar} Z)=(X\cdot Y)\bullet_{\hbar} Z$ and $1_{\mathcal{S}_I}\bullet_{\hbar} Z=1_{\mathcal{S}_I}\bullet Z+\hbar[1_{\mathcal{S}_I},Z]=Z.$
\end{proof}

\section{Operadic structures and cohomology}
\subsection{Dendriform and Tridendriform algebras}
\label{operads}
Here we shall restrict our discussion to the subspace $\mathcal{S}_0$, if not otherwise stated. 

Let us define the following three $\R$-bilinear operations $\prec_I,\succ_I,\cdot_I:\mathcal{S}_0\otimes_{\R}\mathcal{S}_0\rightarrow\mathcal{S}_0$ by:

\begin{eqnarray}
\label{Ito-tridend1}
X\prec_I Y&:=&Y\bullet X\\
\label{Ito-tridend2}
X\succ_I Y&:=&X\bullet Y\\
\label{Ito-tridend3}
X\cdot_I Y&:=& [X,Y]
\end{eqnarray}
and let 
\begin{equation}
\label{prod1}
X\ast Y:=X\prec_I Y+X\succ_I Y+X\cdot_I Y.
\end{equation}
Further, for every $\hbar\in[0,1]$, we define two $\R$-bilinear operations $\dashv^{!}_{\hbar}, \vdash^{!}_{\hbar}:\mathcal{S}_0\otimes_{\R}\mathcal{S}_0\rightarrow\mathcal{S}_0$, by:
\begin{eqnarray}
\label{Ito-dend1}
X\dashv^{!}_{\hbar} Y&:=&X\prec_I Y+(1-\hbar)(X\cdot_I Y)=Y\bullet X+(1-\hbar)[X,Y]\\
\label{Ito-dend2}
X\vdash^{!}_{\hbar} Y&:=& X\succ_I Y+\hbar (X\cdot_I Y)\;\;\qquad=X\bullet Y+\hbar[X,Y]
\end{eqnarray}
and let
\begin{equation*}
X\star Y:= X\dashv^{!}_{\hbar} Y+X\vdash_{\hbar}^{!} Y.
\end{equation*}
We use the {\bf shriek $!$} for reasons of Koszul duality which should become clear below.

Integration by parts gives 
\begin{equation}
\label{Intpart_equal}
X\ast Y=X\cdot Y=X\star Y.
\end{equation}

\begin{thm}
The Itô calculus defines on $\mathcal{S}_0$, with the definitions given in~(\ref{Ito-tridend1}),(\ref{Ito-tridend2},(\ref{Ito-tridend3})) and (\ref{prod1}), the structure of a {\bf commutative tridendriform algebra}, i.e. we have (in mixed notation):
\begin{eqnarray*}
(X\prec_I Y)\prec_I Z&=&X\prec_I(Y\ast Z)\\
(X\succ_I Y)\prec_I Z&=&X\succ_I (Y\prec_I Z)\\
(X\ast Y)\succ_I Z&=&X\succ_I (Y\succ_I Z)\\
{[X\succ_I Y,Z]}&=&X\succ_I[Y,Z]\\
{[X\prec_I Y,Z]}&=&{[X,Y\succ_I Z]}\\
{[X,Y]\prec_I Z} &=&{[X,Y\prec_I Z]}\\
{[[X,Y],Z]}&=&{[X,[Y,Z]]}
\end{eqnarray*}
and commutativity: 
\begin{equation*}
X\prec_I Y=Y\succ_I X \qquad\text{and}\quad [X,Y]=[Y,X]
\end{equation*}
\end{thm}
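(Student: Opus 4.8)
The plan is to verify the seven tridendriform relations and the two commutativity identities one at a time, by unfolding the definitions (\ref{Ito-tridend1}), (\ref{Ito-tridend2}), (\ref{Ito-tridend3}) and reducing each equation to one of the structural properties of $\bullet$ and $[\,,\,]$ recorded above. The single most useful reduction is the integration-by-parts identity (\ref{Intpart_equal}), namely $X\ast Y=X\cdot Y$, which replaces every occurrence of the total product $\ast$ by the pointwise product $\cdot$ and thereby eliminates the only genuinely three-term expressions from the relations.

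First I would treat the three relations that do not involve the bracket. Substituting $X\prec_I Y=Y\bullet X$ and $X\succ_I Y=X\bullet Y$, each becomes an identity of nested It� integrals that collapses under the associativity property $F\bullet(G\bullet X)=(F\cdot G)\bullet X$, after which any remaining discrepancy is absorbed by the commutativity of the pointwise product $\cdot$ on $\mathcal{S}_I$. For instance $(X\prec_I Y)\prec_I Z=(Z\cdot Y)\bullet X$ while $X\prec_I(Y\ast Z)=(Y\cdot Z)\bullet X$, and these agree because $\cdot$ is commutative; the second and third relations are analogous, the third requiring only the associativity step.

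Next I would treat the three relations carrying a single bracket. After unfolding, each side is of the form $F\bullet[\cdot,\cdot]$ or $[F\bullet\cdot,\cdot]$, and the entire content is the module property $F\bullet[X,Y]=[F\bullet X,Y]=[X,F\bullet Y]$ together with the symmetry $[X,Y]=[Y,X]$. Concretely, both sides of each relation reduce to one common expression ($X\bullet[Y,Z]$, $Y\bullet[X,Z]$, and $Z\bullet[X,Y]$ respectively), so no computation beyond a single application of the module law is needed. The two commutativity identities are then immediate: $X\prec_I Y=Y\bullet X=Y\succ_I X$ by definition, and $[X,Y]=[Y,X]$ is the symmetry of the quadratic covariation.

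The one relation that is not a formal consequence of the integral bookkeeping is the associativity of the middle product, $[[X,Y],Z]=[X,[Y,Z]]$. Here I would invoke Theorem~\ref{Ito_Thm}: the bracket takes values in $\mathcal{A}_0$, and since the quadratic covariation of a process of finite variation with any continuous semi-martingale vanishes, both sides are identically zero. This is the step I expect to be the genuine, if short, obstacle, in that it is the only place where one uses the nilpotency of order two rather than mere bilinearity and the module structure; every other relation is routine once the definitions and the properties above are in hand.
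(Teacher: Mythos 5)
Your proposal is correct and follows the same route as the paper, which simply asserts that all relations follow ``from the definitions and the properties of the stochastic integral'' (citing Hackenbroch--Thalmaier, Satz 4.38, which supplies exactly the module identities $F\bullet(G\bullet X)=(F\cdot G)\bullet X$ and $F\bullet[X,Y]=[F\bullet X,Y]=[X,F\bullet Y]$ you use), together with symmetry of the covariation for commutativity. You merely carry out the verification the paper leaves implicit, including the correct observation that $[[X,Y],Z]=[X,[Y,Z]]$ holds because both sides vanish, the bracket taking values in the finite-variation processes $\mathcal{A}_0$ (the nilpotency in Theorem~\ref{Ito_Thm}).
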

\begin{proof}
The commutativity follows from the above definitions and the commutativity of the quadratic covariation. 
The other relations follow again from the definitions and the properties of the stochastic integral, cf.~[\cite{HT}, Satz 4.38.]
\end{proof}

Loday~\cite{L07} showed that commutative tridendriform algebras are intimately connected with {\bf quasi-shuffle algebras} which have the algebraic structure of iterated Itô integrals, as has been previously shown  by J.~Gaines~\cite{G}.

\begin{thm}
For every $\hbar\in[0,1]$, the $\hbar$-integral induces the structure of a {\bf dendriform} algebra on $\mathcal{S}_0$, i.e. 
\begin{eqnarray*}
(X\dashv^{!}_{\hbar} Y)\dashv^!_{\hbar} Z&=& X\dashv^!_{\hbar}(Y\dashv^!_{\hbar} Z)+X\dashv^{!}_{\hbar}(Y\vdash^{!}_{\hbar} Z),\\
(X\vdash^{!}_{\hbar} Y)\dashv^{!}_{\hbar} Z&=& X\vdash^{!}_{\hbar} (Y\dashv^{!}_{\hbar} Z),\\
(X\dashv^{!}_{\hbar} Y)\vdash^{!}_{\hbar} Z+(X\vdash^{!}_{\hbar} Y)\vdash^{!}_{\hbar} Z &=& X\vdash^{!}_{\hbar} (Y \vdash^{!}_{\hbar} Z).
\end{eqnarray*}
For $\hbar=1/2$, i.e. the Stratonovich integral, it is commutative and therefore also a {\bf Zinbiel} algebra.
\end{thm}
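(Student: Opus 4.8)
The plan is to verify each of the three dendriform axioms by direct substitution, reducing every occurrence of $\dashv^{!}_{\hbar}$ and $\vdash^{!}_{\hbar}$ to the basic operations $\bullet$ and $[\,,]$ via the definitions~(\ref{Ito-dend1}) and~(\ref{Ito-dend2}), and then to invoke the already-established structural identities: the module relation $F\bullet(G\bullet X)=(F\cdot G)\bullet X$, the compatibility $F\bullet[X,Y]=[F\bullet X,Y]=[X,F\bullet Y]$, the nilpotency of order two of the bracket (so that $[[X,Y],Z]$ and similar iterated brackets survive but products of two brackets vanish only insofar as the bilinear structure permits — here they do not vanish but must be tracked), and crucially the $\hbar$-associativity of Proposition~\ref{h-associative}, namely $X\bullet_{\hbar}(Y\bullet_{\hbar}Z)=(X\cdot Y)\bullet_{\hbar}Z$. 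The key observation that organises the whole computation is that $X\vdash^{!}_{\hbar}Y=X\bullet_{\hbar}Y$ by definition, so the second and third axioms are essentially repackagings of Proposition~\ref{h-associative}.

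First I would treat the second axiom $(X\vdash^{!}_{\hbar}Y)\dashv^{!}_{\hbar}Z=X\vdash^{!}_{\hbar}(Y\dashv^{!}_{\hbar}Z)$, since it is the cleanest. Expanding both sides using~(\ref{Ito-dend1}),(\ref{Ito-dend2}) and collecting by powers of $\hbar$, the leading terms reproduce the It\^o module relation, while the $\hbar$-corrections are forced to match by the bracket-compatibility identities together with symmetry of $[\,,]$; I expect this to close after a short reorganisation. Next I would note that the third axiom, once one substitutes $X\vdash^{!}_{\hbar}Y=X\bullet_{\hbar}Y$ on the right-hand side and observes that the left-hand side sums over the two ways of associating with a $\dashv^{!}_{\hbar}$ versus a $\vdash^{!}_{\hbar}$ at the outer position, is precisely the statement that $\dashv^{!}_{\hbar}+\vdash^{!}_{\hbar}=\bullet_{\hbar}+\,\cdot\text{-type terms}$ reassemble into $(X\cdot Y)\bullet_{\hbar}Z$; here Proposition~\ref{h-associative} does the heavy lifting, and I would phrase the third axiom as a direct corollary of it combined with the first axiom.

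The first axiom $(X\dashv^{!}_{\hbar}Y)\dashv^{!}_{\hbar}Z=X\dashv^{!}_{\hbar}(Y\dashv^{!}_{\hbar}Z)+X\dashv^{!}_{\hbar}(Y\vdash^{!}_{\hbar}Z)$ is the one I expect to be the main obstacle, because the right-hand side contains a genuine sum of two nested operations and the $\hbar$ and $(1-\hbar)$ weights must conspire correctly. The strategy is to expand the inner operations $Y\dashv^{!}_{\hbar}Z$ and $Y\vdash^{!}_{\hbar}Z$ first, add them to get $Y\dashv^{!}_{\hbar}Z+Y\vdash^{!}_{\hbar}Z=Y\cdot Z-$ (using~(\ref{Intpart_equal}), which gives $\star=\cdot$), then apply the outer $X\dashv^{!}_{\hbar}(-)$; on the other side, expand $X\dashv^{!}_{\hbar}Y$ and apply $(-)\dashv^{!}_{\hbar}Z$. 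Matching then reduces, after using the module relation and bracket compatibilities, to the identity $(Z\bullet Y)\bullet X$-type reassociations together with the observation that the $(1-\hbar)$ factors combine additively; the delicate point is bookkeeping the mixed terms $(1-\hbar)[\,,]$ so that no spurious $\hbar(1-\hbar)$ cross-terms remain.

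Finally, for $\hbar=1/2$ the two operations satisfy $X\dashv^{!}_{1/2}Y=Y\bullet X+\tfrac12[X,Y]$ and $X\vdash^{!}_{1/2}Y=X\bullet Y+\tfrac12[X,Y]$, and by symmetry of the bracket together with the Stratonovich commutativity one checks $X\dashv^{!}_{1/2}Y=Y\vdash^{!}_{1/2}X$; substituting this half-commutativity into the dendriform axioms collapses the three relations to the single Zinbiel relation $(X\vdash^{!}_{1/2}Y)\vdash^{!}_{1/2}Z=X\vdash^{!}_{1/2}(Y\vdash^{!}_{1/2}Z)+X\vdash^{!}_{1/2}(Z\vdash^{!}_{1/2}Y)$, which I would record as an immediate consequence of the dendriform axioms specialised at $\hbar=1/2$.
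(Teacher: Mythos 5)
Your overall strategy is the same ``direct verification'' the paper intends, and much of it is sound: the identification $X\vdash^{!}_{\hbar}Y=X\bullet_{\hbar}Y$, the reduction of the third axiom via bilinearity and $X\dashv^{!}_{\hbar}Y+X\vdash^{!}_{\hbar}Y=X\cdot Y$ (equation~(\ref{Intpart_equal})) to Proposition~\ref{h-associative} (note it is~(\ref{Intpart_equal}), not ``the first axiom'', that you need to combine with that proposition), and the deduction of the Zinbiel property at $\hbar=1/2$ from $X\dashv^{!}_{1/2}Y=Y\vdash^{!}_{1/2}X$, which is immediate from the symmetry of $[\,,]$. The genuine problem is your parenthetical claim that the iterated covariations $[[X,Y],Z]$ ``do not vanish but must be tracked''. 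They vanish identically: $[X,Y]\in\mathcal{A}_0$ has paths of finite variation, and the quadratic covariation of a finite-variation process with any continuous semimartingale is zero --- this is precisely the ``nilpotent of order two'' assertion of Theorem~\ref{Ito_Thm}, which you invoke as a tool and then negate.

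This is not a cosmetic slip, because that vanishing is the only mechanism that removes the $\hbar(1-\hbar)$ cross-terms which you yourself flag as the delicate point of the first axiom. Concretely, expanding with the definitions~(\ref{Ito-dend1}), (\ref{Ito-dend2}), the module relation $Z\bullet(Y\bullet X)=(Z\cdot Y)\bullet X$, the compatibility $[F\bullet X,Z]=F\bullet[X,Z]$, and $X\dashv^{!}_{\hbar}(Y\dashv^{!}_{\hbar}Z)+X\dashv^{!}_{\hbar}(Y\vdash^{!}_{\hbar}Z)=X\dashv^{!}_{\hbar}(Y\cdot Z)$, one finds
\[
(X\dashv^{!}_{\hbar}Y)\dashv^{!}_{\hbar}Z\;-\;X\dashv^{!}_{\hbar}(Y\cdot Z)\;=\;(1-\hbar)^{2}\,[[X,Y],Z]\;-\;(1-\hbar)\,[X,[Y,Z]].
\]
Even if you grant commutativity and associativity of the bracket, this residue equals $-\hbar(1-\hbar)[[X,Y],Z]$, which for $\hbar\in(0,1)$ is zero only because $[[X,Y],Z]=0$; no amount of reassociation or bookkeeping of the mixed $(1-\hbar)[\,,]$ terms will cancel it otherwise. (The same issue appears more mildly in the second axiom, whose residue is $\hbar(1-\hbar)\bigl([[X,Y],Z]-[X,[Y,Z]]\bigr)$.) So a computation that keeps these iterated brackets as nonzero cannot close; once you replace your parenthetical by the correct statement $[[X,Y],Z]=0$, your outline goes through and coincides with the paper's verification.
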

\begin{rem}
For semimartingales with jumps, some but not all of the above relations hold. Also, one might consider parametrised versions of the dendriform and tridendriform structures, cf.~\cite{LV}. Further, one can naturally extend the present framework to algebras up to homotopy.
\end{rem}
\begin{proof}
Follows again by direct verification by using the above definitions and the properties of the stochastic integral.
\end{proof}
\begin{rem}
The Riemann-Stieltjes integral defines the structure of a commutative dendriform algebra, and hence a Zinbiel algebra.
\end{rem}

As every dendriform algebra defines a left and a right pre-Lie structure, we have 
\begin{prop}
The space $\mathcal{S}_0$ has a left and right pre-Lie product, given by: 
\begin{eqnarray*}
L\{X, Y\}_{\hbar}:=X\vdash^{!}_{\hbar} Y-Y\dashv^{!}_{\hbar} X=(2\hbar-1)[X,Y],\\
R\{X, Y\}_{\hbar}:=X\dashv^{!}_{\hbar}Y-Y\vdash^{!}_{\hbar} X=(1-2\hbar)[X,Y].
\end{eqnarray*}
For the Stratonovich integral both pre-Lie structures vanish.
\end{prop}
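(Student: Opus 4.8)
The plan is to obtain both assertions from the general fact that a dendriform algebra always carries associated pre-Lie products, and then to read off the explicit expressions by a direct substitution. Recall the Chapoton--Livernet correspondence: for any dendriform algebra $(A,\dashv,\vdash)$ the operation $x\vdash y-y\dashv x$ satisfies the left pre-Lie identity and $x\dashv y-y\vdash x$ satisfies the right pre-Lie identity. Since the preceding theorem already furnishes, for each $\hbar\in[0,1]$, a dendriform structure $(\mathcal{S}_0,\dashv^!_\hbar,\vdash^!_\hbar)$, this correspondence immediately shows that $L\{X,Y\}_\hbar=X\vdash^!_\hbar Y-Y\dashv^!_\hbar X$ and $R\{X,Y\}_\hbar=X\dashv^!_\hbar Y-Y\vdash^!_\hbar X$ are left and right pre-Lie products, respectively. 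The one point deserving attention is to match the left/right labelling and the signs with the normalisation of the cited source, so that ``left'' genuinely corresponds to the $\vdash^!_\hbar$-first combination.

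It then remains to simplify. Substituting the definitions $X\vdash^!_\hbar Y=X\bullet Y+\hbar[X,Y]$ and $Y\dashv^!_\hbar X=X\bullet Y+(1-\hbar)[Y,X]$ and invoking the symmetry $[Y,X]=[X,Y]$, the It\^o integrals cancel and one finds $L\{X,Y\}_\hbar=(2\hbar-1)[X,Y]$; the parallel substitution yields $R\{X,Y\}_\hbar=(1-2\hbar)[X,Y]$. Both are routine. The Fisk--Stratonovich value $\hbar=\half$ makes the prefactor $2\hbar-1$ vanish, so both pre-Lie products are identically zero, which is the final clause of the statement.

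Finally, I would record an independent verification that bypasses the general correspondence and explains why no real obstacle arises. Writing $c:=2\hbar-1$, so that $L\{X,Y\}_\hbar=c[X,Y]$, one has $L\{L\{X,Y\},Z\}=c^2[[X,Y],Z]$ and $L\{X,L\{Y,Z\}\}=c^2[X,[Y,Z]]$. By Theorem~\ref{Ito_Thm} the bracket $(\mathcal{S},[\,,\,])$ is an associative algebra which is moreover nilpotent of order two; either property forces the associator $[[X,Y],Z]-[X,[Y,Z]]$ to vanish. Hence any scalar multiple $c[\,,\,]$ is again associative, and an associative product has vanishing associator, so it is a fortiori both left and right pre-Lie; the same observation applies verbatim to $R$. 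Thus the entire statement collapses onto the associativity (equivalently, the order-two nilpotency) of the quadratic covariation established in Theorem~\ref{Ito_Thm}, and there is essentially no computational difficulty to overcome: the only genuine care needed anywhere is the bookkeeping of the left/right conventions inherited from the dendriform structure.
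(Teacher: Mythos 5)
Your proposal is correct, and its core is the same as the paper's: the paper offers no written proof of this proposition at all, only the prefatory sentence ``As every dendriform algebra defines a left and a right pre-Lie structure, we have \dots'', which is exactly the Chapoton--Livernet-type correspondence you invoke, followed implicitly by the routine substitution you carry out (your cancellation of the It\^o terms and the coefficients $(2\hbar-1)$, $(1-2\hbar)$ checks out against the definitions (\ref{Ito-dend1})--(\ref{Ito-dend2}) and the symmetry of $[\,,\,]$). What you add beyond the paper is the third paragraph, and it is a genuine improvement: since $L\{\,,\,\}_\hbar$ and $R\{\,,\,\}_\hbar$ are scalar multiples of the quadratic covariation, and Theorem~\ref{Ito_Thm} makes $(\mathcal{S},[\,,\,])$ associative and nilpotent of order two (so $[[X,Y],Z]=0=[X,[Y,Z]]$ and the associator vanishes identically), both operations satisfy the left \emph{and} right pre-Lie identities for trivial reasons. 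This elementary route is self-contained, and it also disposes of the one delicate point in the general argument that you rightly flag, namely matching the left/right labelling and sign conventions of the dendriform-to-pre-Lie correspondence with those of the statement: whatever convention is used, a scalar multiple of an associative product is pre-Lie on both sides, so the labelling question becomes immaterial.
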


\subsection{Deformations}
Deformations of modules are, according to the work of M. Gerstenhaber, characterised by co-cycles in the Hochschild cohomology. Here we follow the approach in~\cite{Y}, cf. also the review~\cite{Du}.

Let $(R,\alpha)$ be a $k$-algebra, with $k$ a field of characteristic zero, and $(M,\xi)$ a left $R$-module, i.e. $\xi$ is a map of associative $k$-algebras $\xi:R\rightarrow\End_k(M)$, where $(\End_k(M),\circ)$ is endowed with the associative $k$-algebra structure given by composition of $k$-linear endomorphisms. 

A {\bf formal deformation} of $\xi$ is given by a power series
\begin{equation*}
\xi_{\hbar}=\xi+\hbar\xi_1+\hbar^2\xi_2+\cdots,
\end{equation*}
where for every $i\in\N^*$, $\xi_i\in\Hom_k(R,\End_k(M))$, i.e. $\xi_i$ is a $k$-linear map from the $k$-algebra $R$ to the $k$-module $\End_k(M)$, such that the following multiplicative condition is satisfied:
\begin{equation*}
\xi_{\hbar}(rs)=\xi_{\hbar}(r)\circ\xi_{\hbar}(s)\qquad\forall r,s\in R,
\end{equation*}
i.e. $\xi_{\hbar}\in \Hom_{\mathbf{Alg}_k}(R,\End_k(M))$. Then one calls the $k$-linear map $\xi_1$ the {\bf infinitesimal deformation} of $\xi_{\hbar}$. We note that by definition $\xi$ is multiplicative, but not necessarily the $\xi_i$'s. 
Now, for $n\in\N$, we have
\begin{equation*}
C^n(\mathcal{S}_I,\mathcal{S}_0)=\Hom_{\R}(\mathcal{S}_I^{\otimes n}\otimes \mathcal{S}_0,\mathcal{S}_0)
\end{equation*}
with the corresponding $\N$-graded $\R$-module
$$
C^{\bullet}(\mathcal{S}_I,\mathcal{S}_0):=\bigoplus_{n=0}^{\infty}C^n(\mathcal{S}_I,\mathcal{S}_0),
$$
i.e. the {\bf Hochschild complex}. The differential $d$ has components \begin{equation*}
d_{n}(f):=\xi(\mathbbm{1}_{\mathcal{S}_I}\otimes f)+\sum_{i=1}^n(-1)^{i} f(\mathbbm{1}^{\otimes i-1}_{\mathcal{S}_I}\otimes\alpha\otimes \mathbbm{1}^{\otimes n-i}\otimes \mathbbm{1}_{\mathcal{S}_0})+(-1)^{n+1}f(\mathbbm{1}^{\otimes n}_{\mathcal{S}_I}\otimes \xi),
\end{equation*}
for all $n\in\N$ and $f\in C^n(\mathcal{S}_I,\mathcal{S}_0)$.

Let 
\begin{equation*}
\xi:\mathcal{S}_I\rightarrow\End_{\R}(\mathcal{S}_0),\quad X\mapsto X\bullet-,
\end{equation*}
which by~[\cite{HT}, Satz 4.38] is multiplicative, and 
\begin{equation*}
\xi_1:\mathcal{S}_I\rightarrow\End_{\R}(\mathcal{S}_0), \quad X\mapsto[X,-].
\end{equation*} 
which is just $\R$-linear. 
\begin{thm}
The $\hbar$-integral $\bullet_{\hbar}:=\bullet+\hbar[~,]$ is a first order (infinitesimal) deformation of the Itô integral $\bullet$. The quadratic covariation $[,]$ gives a {Hochschild \bf $1$-cocycle}, i.e. an element of $Z^1(\mathcal{S}_I,\mathcal{S}_0)$. 
\end{thm}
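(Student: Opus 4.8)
The plan is to treat the statement as two linked assertions and to read the cocycle claim off the deformation claim in the spirit of Gerstenhaber's theory. First I would record that the map $\xi_{\hbar}\colon\mathcal{S}_I\to\End_{\R}(\mathcal{S}_0)$ defined by $\xi_{\hbar}(X):=X\bullet_{\hbar}-$ has, by the Proposition on the $\hbar$-integral, the expansion $\xi_{\hbar}=\xi+\hbar\,\xi_1$ with $\xi_1(X)=[X,-]$ and \emph{no} higher-order terms. To see that this is a formal deformation I would verify the single multiplicative condition $\xi_{\hbar}(X\cdot Y)=\xi_{\hbar}(X)\circ\xi_{\hbar}(Y)$; evaluated on $Z\in\mathcal{S}_0$ this reads $(X\cdot Y)\bullet_{\hbar}Z=X\bullet_{\hbar}(Y\bullet_{\hbar}Z)$, which is exactly Proposition~\ref{h-associative}. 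Since the series terminates after the linear term, $\xi_{\hbar}$ is a first-order (infinitesimal) deformation of $\xi$, with infinitesimal part $\xi_1=[\,,\,]$.

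For the cocycle claim I would compute $d_1(\xi_1)\in C^2(\mathcal{S}_I,\mathcal{S}_0)$ directly from the displayed differential. Evaluating the three terms of $d_1$ on $X\otimes Y\otimes m$ and using the module interpretation $\xi(X)(n)=X\bullet n$ gives
\begin{equation*}
d_1(\xi_1)(X\otimes Y\otimes m)=X\bullet[Y,m]-[X\cdot Y,m]+[X,Y\bullet m].
\end{equation*}
I would then simplify the middle term by the identity $[X\cdot Y,Z]=[X\bullet Y,Z]+[Y\bullet X,Z]$, obtained by applying $[\,\cdot\,,Z]$ to the integration-by-parts formula $X\bullet Y+Y\bullet X=X\cdot Y-X_0Y_0-[X,Y]$ exactly as in the proof of Proposition~\ref{h-associative} (the constant $X_0Y_0$ and the iterated bracket $[[X,Y],Z]$ drop out). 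Together with the associativity relation $F\bullet[A,B]=[F\bullet A,B]$ this turns the middle term into $X\bullet[Y,m]+Y\bullet[X,m]$, while the same relation gives $[X,Y\bullet m]=Y\bullet[X,m]$; the three contributions then cancel and $d_1(\xi_1)=0$, so $[\,,\,]\in Z^1(\mathcal{S}_I,\mathcal{S}_0)$.

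I expect the only real subtlety to be the correct reading of the differential $d_1$ on tensor arguments, since the terms $\xi(\mathbbm{1}\otimes f)$ and $f(\mathbbm{1}\otimes\xi)$ must be interpreted through the module action rather than as abstract composites; once this is fixed, the cancellation is forced by the bilinearity and module relations already recorded for the It\^o calculus. Conceptually, the cocycle identity is nothing but the coefficient of $\hbar^{1}$ in the multiplicativity relation of the first paragraph, so the second assertion is in fact logically contained in the first. The one structural point worth emphasising, rather than an obstacle, is that the potential order-$\hbar^{2}$ obstruction $\xi_1(X)\circ\xi_1(Y)(m)=[X,[Y,m]]$ vanishes because $[Y,m]\in\mathcal{A}_0$ has bounded variation and the covariation bracket is nilpotent of order two by Theorem~\ref{Ito_Thm}; this is precisely what lets the infinitesimal deformation integrate exactly and terminate at first order.
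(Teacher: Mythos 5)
Your proposal is correct, and its first half coincides exactly with the paper's proof: both establish that $\xi_{\hbar}=\xi+\hbar\xi_1$ is multiplicative by invoking Proposition~\ref{h-associative}, which is the entire content of the deformation claim. The difference lies in the cocycle claim: the paper disposes of it by citing~[\cite{Y}, Proposition~3.1], the general fact that the infinitesimal of a formal deformation of a module is a Hochschild $1$-cocycle, whereas you unpack that citation into an explicit verification that $d_1(\xi_1)=0$. Your computation is sound: the evaluation $d_1(\xi_1)(X\otimes Y\otimes m)=X\bullet[Y,m]-[X\cdot Y,m]+[X,Y\bullet m]$ reads the differential correctly, the identity $[X\cdot Y,m]=[X\bullet Y,m]+[Y\bullet X,m]$ is exactly the auxiliary computation in the proof of Proposition~\ref{h-associative} (with $[X_0Y_0,m]=0$ and $[[X,Y],m]=0$ since finite-variation processes have vanishing covariation), and the module relation $F\bullet[A,B]=[F\bullet A,B]=[A,F\bullet B]$ collapses everything to zero. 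What each route buys: the paper's citation is shorter and emphasizes that the statement is an instance of general deformation theory; your version is self-contained and exhibits precisely which It\^o-calculus identities carry the cocycle condition. Your closing observation---that the would-be order-$\hbar^2$ obstruction $[X,[Y,m]]$ vanishes by nilpotency of the bracket, so the deformation terminates exactly at first order rather than merely formally---is a genuine point of content that the paper leaves implicit, and it correctly explains why the truncated series $\xi+\hbar\xi_1$ is multiplicative on the nose.
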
 
\begin{proof}
By Proposition~\ref{h-associative}, $\xi_{\hbar}:=\xi+\hbar\xi_1$ is multiplicative, which implies that $\xi_1$ is infinitesimal. Then by~[\cite{Y}, Proposition~3.1] the claim follows.
\end{proof}

It seems that the deformation problem is {\bf not rigid}, i.e. there exists no $g\in\End_{\R}(\mathcal{S}_0)$ such that for $F\in\mathcal{S}_I$ and $X\in\mathcal{S}_0$, the equation 
\begin{equation*}
\label{not-rigid}
[F,X]=F\bullet g(X)-g(F\bullet X)
\end{equation*}
holds, i.e. $H^1(\mathcal{S}_I,\mathcal{S}_0)\neq 0$.

\section{Operad morphisms and Lie groupoid structures}
\subsection{The stopping time operator}
Stopping times are comparable to contour lines, i.e. like annual rings of a tree. We shall use  the common notations $\wedge:=\min$ (meet) (and $\vee:=\max$ (join)).
Let $X$ be stochastic process and $\tau$ a stopping time. The {\bf stopped process} $\tau(X):=(X_{t\wedge\tau})_{t\in\R_+}$ is given by
$$
\tau(X_t)(\omega):=X_{t\wedge\tau}(\omega):=
\begin{cases}
      & X_{t}(\omega)\quad\text{for $t\leq\tau(\omega)$}, \\
      & X_{\tau(\omega)}(\omega)\quad\text{for $t>\tau(\omega)$}.
\end{cases}
$$
i.e. after time $\tau$ the paths of $\tau(X)$ are constant.

It is known~\cite{Fr} that the set of stopping times forms a lattice when partially ordered with: $\tau<\tau'$ if $\{\tau>t\}\subseteq\{\tau'>t\}$. 

Every stopping time $\tau$ defines an $\R$-linear operator 
$\tau:\mathcal{C}\rightarrow\mathcal{C}$,
$\tau(\alpha X+\beta Y)=\alpha\tau(X)+\beta\tau(Y)$, for $\alpha,\beta\in\R$
 which further respects the Doob-Meyer decomposition, i.e. $\tau(\mathcal{M}_0)\subset\mathcal{M}_0$ and $\tau(\mathcal{A})\subset\mathcal{A}$. 
 \begin{prop}
For every $\hbar\in[0,1]$, each stopping time is an endomorphism of the dendriform and tridendriform structure on $\mathcal{S}_0$, i.e. we have:
\begin{eqnarray*}
\tau(X\dashv^{!}_{\hbar} Y)&=&\tau(X)\dashv^{!}_{\hbar}\tau(Y)\\
\tau(X\vdash^{!}_{\hbar} Y)&=&\tau(X)\vdash^{!}_{\hbar}\tau(Y)
\end{eqnarray*}
and
\begin{eqnarray*}
\tau(X\prec_I Y)&=&\tau(X)\prec_I\tau(Y)\\
\tau(X\succ_I Y)&=&\tau(X)\succ_I\tau(Y)\\
\tau(X\cdot_I Y)&=&\tau(X)\cdot_I \tau(Y)
\end{eqnarray*}
\end{prop}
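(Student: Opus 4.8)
The plan is to collapse all five identities onto two ``generating'' compatibility relations and then verify those from the defining limits. Since $\tau:\mathcal{C}\rightarrow\mathcal{C}$ is $\R$-linear, as recalled just above, and since each of the operations $\prec_I,\succ_I,\cdot_I,\dashv^{!}_{\hbar},\vdash^{!}_{\hbar}$ is by~(\ref{Ito-tridend1})--(\ref{Ito-dend2}) an $\R$-linear combination of the It\^o integral $\bullet$ and the quadratic covariation $[~,~]$, it suffices to establish
\begin{equation*}
\tau(X\bullet Y)=\tau(X)\bullet\tau(Y)\qquad\text{and}\qquad \tau([X,Y])=[\tau(X),\tau(Y)].
\end{equation*}
Granting these, each claimed equality follows by expanding the definition, distributing $\tau$ over the linear combination, and applying the generating identities termwise; for instance $\tau(X\dashv^{!}_{\hbar}Y)=\tau(Y\bullet X)+(1-\hbar)\tau([X,Y])=\tau(Y)\bullet\tau(X)+(1-\hbar)[\tau(X),\tau(Y)]=\tau(X)\dashv^{!}_{\hbar}\tau(Y)$, and the other four cases (two dendriform, three tridendriform) are verbatim. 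Thus the whole proposition reduces to showing that stopping commutes with $\bullet$ and with $[~,~]$, and note that these do not involve $\hbar$, so the statement is uniform in $\hbar\in[0,1]$.

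For the covariation identity I would argue directly from $[X,Y]=\lim_{n}[X,Y]_{\mathcal{Z}_n}$, writing $X^{\tau}:=\tau(X)$. The stopped increments satisfy $X^{\tau}_{t^{(n)}_j}-X^{\tau}_{t^{(n)}_{j-1}}=X_{t^{(n)}_j\wedge\tau}-X_{t^{(n)}_{j-1}\wedge\tau}$, which vanish once $t^{(n)}_{j-1}\geq\tau$. Hence, on a fixed path, the discrete covariation $[X^{\tau},Y^{\tau}]_{\mathcal{Z}_n}$ evaluated at $t$ agrees cell-by-cell with the original sum on every partition interval lying entirely below $\tau$, vanishes on every interval above $\tau$, and differs only on the single cell $[t^{(n)}_{j-1},t^{(n)}_j]$ straddling $\tau$, where its summand is $(X_{\tau}-X_{t^{(n)}_{j-1}})(Y_{\tau}-Y_{t^{(n)}_{j-1}})$. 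Since $X,Y\in\mathcal{C}$ are path-wise continuous, this straddling term tends to $0$ as $|\mathcal{Z}_n|\to 0$, so in the limit $[X^{\tau},Y^{\tau}]_t=[X,Y]_{t\wedge\tau}=\tau([X,Y])_t$; equivalently this is the standard localisation $[X^{\tau},Y^{\tau}]=[X,Y]^{\tau}$ of the quadratic covariation, for which one may cite~\cite{HT,IW}.

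The It\^o integral identity is the step I expect to be the main obstacle, because here the compatibility of the \emph{limit} with truncation at $\tau$ is more delicate than in the robust covariation case. The clean splitting is by the position of $t$: for $t\leq\tau$ the stopped processes coincide with the originals so both sides agree, while for $t>\tau$ the stopped integrator $\tau(Y)$ is constant beyond $\tau$, so $\tau(X)\bullet\tau(Y)$ stops growing after $\tau$. Concretely, the approximating sum $S_n(\tau(X),\tau(Y),t,\omega)$ retains only the cells below $\tau$, contributing $\sum_{j:\,t^{(n)}_j\leq\tau}X_{t^{(n)}_{j-1}}(Y_{t^{(n)}_j}-Y_{t^{(n)}_{j-1}})$ together with the straddling summand $X_{t^{(n)}_{j-1}}(Y_{\tau}-Y_{t^{(n)}_{j-1}})$, and one must show that this converges, in whatever mode the defining limit~(\ref{lambda_int}) is taken, to $(X\bullet Y)_{\tau}=\tau(X\bullet Y)_t$. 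The path-continuity of $Y$ makes the straddling term vanish and pushes the last partition point up to $\tau$, but verifying the convergence of the truncated sums rigorously is exactly the content of the optional-stopping / localisation property of the stochastic integral, $(X\bullet Y)^{\tau}=X^{\tau}\bullet Y^{\tau}$, which one may alternatively invoke from~[\cite{HT}, Chapter~4]. With both generating identities in hand, the reduction of the first paragraph closes the proof.
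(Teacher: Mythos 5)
Your proposal is correct and follows essentially the same route as the paper: the paper's proof is precisely a reduction to the stopping-compatibility of the It\^o integral and of the quadratic covariation (citing~[\cite{HT}, Korollar p.~173 and Satz~4.35]) combined with the definitions of the five operations, which is exactly your two ``generating'' identities $\tau(X\bullet Y)=\tau(X)\bullet\tau(Y)$ and $\tau([X,Y])=[\tau(X),\tau(Y)]$ plus $\R$-linearity of $\tau$. Your additional sketches of these identities from the approximating sums go beyond the paper's citation but end up invoking the same standard localisation results, so the substance coincides.
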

\begin{rem}
For $F\in\mathcal{S}_I, X\in\mathcal{S}_0$ we have the $\mathcal{S}_I$-module morphism, 
$\tau(F\bullet X)=F\bullet\tau(X )$. Further,  the following relations are also valid,  $\tau[X,Y]=[X,\tau(Y)]=[\tau(X),Y]$, cf.~[\cite{HT}, Satz 4.35.].
\end{rem}
\begin{proof}
This follows by direct calculations from the properties given in~[\cite{HT}, Korollar p.~173 and Satz~4.35.] and the definitions in Section~\ref{operads}.
\end{proof}
\subsection{The time change operator}
Here we do not consider the most general notion of time change but a particularly adapted one. References for this section are~[\cite{IW}, Chapter~III], [\cite{HT}, Chapter~5.2] and~\cite{CdSW,W2}. We fix a standard filtered probability space $(\Omega,\mathcal{F},(\mathcal{F}_t)_{t\in\R_+},\P)$.

The set $\mathcal{A}_{++}$ is a {\bf blunt cone}, i.e. $\forall \lambda>0$ and $\phi\in\mathcal{A}_{++}$ we have $\lambda \phi\in\mathcal{A}_{++}$. We note that these elements have zero variation and are called {\bf time changes}.

Every time change process $\phi$ defines an $(\mathcal{F}_t)$-stopping time process $(\tau_{\phi})_t$, or just $\tau_t$, by
\begin{equation*}
(\tau_{\phi})_t:=\inf\{u\in\R_+ |\phi_u>t\}.
\end{equation*}
Then $(\tau_{\phi})_0=0$, the map $t\mapsto(\tau_{\phi})_t$ is continuous and strictly increasing and $\lim_{t\to\infty}(\tau_{\phi})_t=+\infty$, a.s. 

The time-transformed filtration
\begin{equation*}
\mathcal{F}_{{\phi}_t}:=\mathcal{F}_{(\tau_{\phi})_t}\qquad\text{for $t\in\R_+$},
\end{equation*}
induces a new filtration on $(\Omega,\mathcal{F},\P)$ and hence determines the corresponding vector spaces $\mathcal{C}_{\phi}, \mathcal{A}_{\phi},\mathcal{M}_{\phi}$ and $\mathcal{S}_{\phi}$. 

Every $\phi\in\mathcal{A}_{++}$ defines an $\R$-linear operator 
$$
T_{\phi}:\mathcal{S}\rightarrow\mathcal{S}_{\phi},\qquad (T_{\phi}(X))_t:=X_{(\tau_{\phi})_t}\quad\forall X\in\mathcal{S},
$$ 
or in more detail, $X_{(\tau_{\phi})_t}(\omega,t)=X_{\tau_{(\phi(\omega))_t}}(\omega)$.
\begin{prop}[\cite{IW}, p.102]
\label{time_change_IW}
$T_{\phi}:\mathcal{S}\rightarrow\mathcal{S}_{\phi}$ is a linear isomorphism which respects the Doob-Meyer decomposition, as shown in the commutative diagram below:
\[
\begin{xy}
  \xymatrix{
     \mathcal{S}\ar@<-3pt>[d]_{\pr_1}\ar@<+2pt>[d]^{\pr_2}\ar[rr]^{T_{\phi}} &   &\mathcal{S}_{\phi}\ar@<-3pt>[d]_{\pr_1}\ar@<+2pt>[d]^{\pr_2} &&  \\
                            \mathcal{M}_0\oplus\mathcal{A}\ar[rr]^{T_{\phi}}   &  & (\mathcal{M}_{\phi})_0\oplus\mathcal{A}_{\phi}               }
\end{xy}
\]
i.e. for $X\in\mathcal{S}$ we have $\pr_i(T_{\phi}(X))=T_{\phi}|_i(\pr_i(X))$, $i=1,2$.  Further, $T_{\phi}$ is an isomorphism of the $\mathcal{S}_I$-algebra / module $\mathcal{S}_0$ with the $(\mathcal{S}_{\phi})_I$-algebra / module $(\mathcal{S}_{\phi})_0$.
\end{prop}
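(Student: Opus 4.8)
The plan is to establish the three assertions—linear isomorphism, compatibility with the Doob--Meyer splitting, and compatibility with the algebraic structures—in that order, reducing each to a classical time-change result for continuous semimartingales. First I would verify linearity and bijectivity. Linearity of $T_{\phi}$ is immediate, since $(T_{\phi}(X))_t = X_{(\tau_{\phi})_t}$ is defined pathwise by evaluation at the reparametrised times $(\tau_{\phi})_t$, and evaluation commutes with the $\R$-linear operations on paths. Because $\phi\in\mathcal{A}_{++}$ is continuous, strictly increasing, starts at $0$ and tends to $+\infty$, the associated process $(\tau_{\phi})_t$ is its pathwise inverse and shares these properties; hence $t\mapsto(\tau_{\phi})_t$ is a bijection of $\R_+$ onto itself for a.e.\ $\omega$, and $T_{\phi}$ admits a two-sided inverse, namely the time change associated with the inverse process. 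This already yields a linear isomorphism at the level of $\mathcal{C}$ and $\mathcal{C}_{\phi}$.

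Next I would prove compatibility with the Doob--Meyer decomposition. The bounded-variation part is elementary: composing a path of bounded local variation with a continuous monotone reparametrisation again has bounded local variation, so $T_{\phi}(\mathcal{A})\subseteq\mathcal{A}_{\phi}$. The martingale part is the heart of the matter and rests on the time-change theorem for martingales: if $M\in\mathcal{M}_0$ and $(\tau_{\phi})_t$ is the increasing, continuous family of stopping times above, then the optional stopping theorem shows that $(M_{(\tau_{\phi})_t})_t$ is a local martingale for the time-changed filtration $\mathcal{F}_{\phi_t}=\mathcal{F}_{(\tau_{\phi})_t}$, i.e.\ $T_{\phi}(\mathcal{M}_0)\subseteq(\mathcal{M}_{\phi})_0$. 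Applying the same argument to the inverse time change gives surjectivity onto each summand, so the splitting $\mathcal{S}=\mathcal{M}_0\oplus\mathcal{A}$ is carried isomorphically onto $(\mathcal{M}_{\phi})_0\oplus\mathcal{A}_{\phi}$. The intertwining relation $\pr_i(T_{\phi}(X))=T_{\phi}|_i(\pr_i(X))$ then follows directly from the uniqueness of the decomposition in $\mathcal{S}_{\phi}$.

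Finally I would check the algebra/module compatibility. Compatibility with pointwise multiplication, $T_{\phi}(X\cdot Y)=T_{\phi}(X)\cdot T_{\phi}(Y)$, is trivial since both sides evaluate the product path at $(\tau_{\phi})_t$; this shows at once that $T_{\phi}$ restricts to an isomorphism $\mathcal{S}_I\to(\mathcal{S}_{\phi})_I$ of unital commutative $\R$-algebras. It remains to treat the quadratic covariation and the It\^o integral. For the covariation one uses that $[X,Y]$ is the intrinsic pathwise limit of sums over partitions, and that a partition of $[0,(\tau_{\phi})_t]$ corresponds under the reparametrisation to a partition of $[0,t]$, giving $T_{\phi}([X,Y])=[T_{\phi}(X),T_{\phi}(Y)]$ computed within $\mathcal{S}_{\phi}$.

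The hard part will be the analogous statement for the It\^o integral, namely that stochastic integration commutes with the time change,
\[
T_{\phi}(X\bullet Y)=T_{\phi}(X)\bullet T_{\phi}(Y),
\]
the right-hand integral being the It\^o integral of the time-changed filtration. Unlike the covariation, this is not a pathwise identity: the integral against the martingale part is defined through an $L^2$-limit, so the equality must be established using the It\^o isometry together with the martingale time-change theorem of the previous paragraph, as carried out in~[\cite{IW}, Chapter~III]. Granting this formula, compatibility with the $\hbar$-integral $\bullet_{\hbar}=\bullet+\hbar[\,,]$, and hence with the full dendriform and tridendriform module structures of Section~\ref{operads}, follows by $\R$-linearity, completing the identification of $T_{\phi}$ as an isomorphism of the $\mathcal{S}_I$-algebra/module $\mathcal{S}_0$ with the $(\mathcal{S}_{\phi})_I$-algebra/module $(\mathcal{S}_{\phi})_0$.
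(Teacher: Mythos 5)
Your proposal is correct and takes essentially the same route as the paper: the paper supplies no argument of its own for Proposition~\ref{time_change_IW}, stating it as a result quoted from~[\cite{IW}, p.~102], and your outline reconstructs the classical ingredients behind that citation (pathwise inversion of the time change, the martingale time-change theorem via optional stopping, pathwise invariance of bounded variation and of the quadratic covariation, and the commutation of the It\^o integral with the time change, which you rightly defer back to~[\cite{IW}, Chapter~III]). Since the one genuinely hard analytic step is delegated to the same reference the paper itself invokes, there is no substantive divergence between your sketch and the paper's treatment.
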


\begin{prop}
For every $\phi\in\mathcal{A}_{++}$, the associated time change operator $T_{\phi}$ defines an isomorphism between the dendriform and tridendriform operad structures on $\mathcal{S}_0$ and $(\mathcal{S}_{\phi})_0$, i.e.
\begin{eqnarray*}
T_{\phi}(X\dashv^{!}_{\hbar} Y)&=&T_{\phi}(X)\dashv^{!}_{\hbar} T_{\phi}(Y)\\
T_{\phi}(X\vdash^{!}_{\hbar} Y)&=&T_{\phi}(X)\vdash^{!}_{\hbar} T_{\phi}(Y)
\end{eqnarray*}
and
\begin{eqnarray*}
T_{\phi}(X\prec_I Y)&=&T_{\phi}(X)\prec_I T_{\phi}(Y)\\
T_{\phi}(X\succ_I Y)&=&T_{\phi}(X)\succ_I T_{\phi}(Y)\\
T_{\phi}(X\cdot_I Y)&=&T_{\phi}(X)\cdot_I T_{\phi}(Y)
\end{eqnarray*}
\end{prop}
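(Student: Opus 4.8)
The plan is to reduce all five identities to two intertwining properties of $T_\phi$, namely its compatibility with the It\^o integral $\bullet$ and with the quadratic covariation $[~,~]$. The point is that each of the operations $\prec_I,\succ_I,\cdot_I$ and $\dashv^{!}_{\hbar},\vdash^{!}_{\hbar}$ is, by its very definition in~(\ref{Ito-tridend1})--(\ref{Ito-tridend3}) and~(\ref{Ito-dend1})--(\ref{Ito-dend2}), an $\R$-linear combination of $\bullet$ and $[~,~]$. Since $T_\phi$ is $\R$-linear by the discussion preceding Proposition~\ref{time_change_IW}, all the asserted identities will follow at once once these two intertwining properties are in hand. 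This mirrors exactly the situation for the stopping-time operator treated in the previous proposition.

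First I would record the compatibility with the It\^o integral. This is precisely the content of the last sentence of Proposition~\ref{time_change_IW}: $T_\phi$ is an isomorphism of the $\mathcal{S}_I$-module $\mathcal{S}_0$, whose module action is $F\bullet-$, onto the $(\mathcal{S}_\phi)_I$-module $(\mathcal{S}_\phi)_0$. Concretely this yields
\begin{equation*}
T_\phi(X\bullet Y)=T_\phi(X)\bullet T_\phi(Y),\qquad X,Y\in\mathcal{S}_0.
\end{equation*}
Second I would establish the compatibility with the quadratic covariation,
\begin{equation*}
T_\phi([X,Y])=[T_\phi(X),T_\phi(Y)],\qquad X,Y\in\mathcal{S}_0,
\end{equation*}
which is the classical behaviour of the bracket under a time change, cf.~[\cite{IW}, Chapter~III]; it is also subsumed in the assertion of Proposition~\ref{time_change_IW} that $T_\phi$ is an isomorphism of the associated $\mathcal{S}_I$-algebra structures.

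With these two facts the verification is purely formal. For the tridendriform operations one has $T_\phi(X\succ_I Y)=T_\phi(X\bullet Y)=T_\phi(X)\bullet T_\phi(Y)=T_\phi(X)\succ_I T_\phi(Y)$; the identity for $\prec_I$ follows by swapping the arguments, and that for $\cdot_I=[~,~]$ is the bracket intertwining itself. For the dendriform operations one combines both properties with $\R$-linearity, e.g.
\begin{equation*}
T_\phi(X\dashv^{!}_{\hbar} Y)=T_\phi\bigl(Y\bullet X+(1-\hbar)[X,Y]\bigr)=T_\phi(Y)\bullet T_\phi(X)+(1-\hbar)[T_\phi(X),T_\phi(Y)]=T_\phi(X)\dashv^{!}_{\hbar}T_\phi(Y),
\end{equation*}
and analogously for $\vdash^{!}_{\hbar}$.

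The main obstacle is the intertwining of the quadratic covariation under the time change. In contrast to the module/algebra structure, which is asserted directly in Proposition~\ref{time_change_IW}, the identity $T_\phi([X,Y])=[T_\phi(X),T_\phi(Y)]$ rests on the classical time-change theorem for quadratic variation: the partition defining $[X,Y]_{\mathcal{Z}_n}$ must be transported correctly through the random time change, and one uses the a.s.\ continuity and strict monotonicity of $t\mapsto(\tau_\phi)_t$ to guarantee that the limits of the approximating sums are preserved. Once this is granted, everything else is a formal consequence of $\R$-linearity.
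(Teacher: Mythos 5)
Your proposal is correct and takes essentially the same route as the paper: the paper's proof is precisely a ``direct verification from the definitions in Section~\ref{operads} and Proposition~\ref{time_change_IW}'', where the $\mathcal{S}_I$-algebra/module assertion of Proposition~\ref{time_change_IW} already encodes the two intertwining properties (of $T_\phi$ with $\bullet$ and with $[\,,\,]$, the latter being the $\mathcal{S}_I$-algebra structure of Theorem~\ref{Ito_Thm}) that you isolate. You merely spell out the formal reduction that the paper leaves implicit.
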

\begin{proof}
This follows by direct verification from the definitions in Section~\ref{operads} and Proposition~\ref{time_change_IW}.
\end{proof}
Define the stochastic process $\id_t\in\mathcal{A}_{++}$ as $\id_t(\omega):=t$ for all 
$\omega\in\Omega$ and let $\mathfrak{S}_{\mathcal{A}_{++}}:=\bigcup_{\phi\in\mathcal{A}_{++}}\mathcal{S}_{\phi}$ be the disjoint union of all semimartingales obtained by a time-change. Then the fibre over $\id_{t}$ is  $\mathcal{S}$ and $\mathfrak{S}_{\mathcal{A}_{++}}$ is a real vector bundle over $\mathcal{A}_{++}$.

We define the following small category $G_{\mathcal{A}_{++}}$: 

the set of objects is $\mathfrak{S}_{\mathcal{A}_{++}}$, and for $\mathcal{S}_{\phi},\mathcal{S}_{\psi}\in\mathfrak{S}_{\mathcal{A}_{++}}$, we let the set of arrows be $G(\mathcal{S}_{\phi},\mathcal{S}_{\psi}):=\{T_{\phi,\psi}\}$ where
\begin{equation}
\label{time_change_mor}
T_{\phi,\psi}:=T_{\psi}\circ T^{-1}_{\phi}:\mathcal{S}_{\phi}\rightarrow\mathcal{S}_{\psi},
\end{equation}
and hence $G(\mathcal{S}_{\phi},\mathcal{S}_{\phi})=\{\id_{\mathcal{S}_{\phi}}\}$. Then $\mathcal{S}_{\phi}$ is called the {\bf source} and $\mathcal{S}_{\psi}$ the {\bf target}.

Further, for every pair $\mathcal{S}_{\phi},\mathcal{S}_{\psi}\in\mathfrak{S}_{\mathcal{A}_{++}}$, we have a map $\operatorname{inv}:G(\mathcal{S}_{\phi},\mathcal{S}_{\psi})\rightarrow G(\mathcal{S}_{\psi},\mathcal{S}_{\phi})$ given by
\begin{equation*}
T_{\phi,\psi}\mapsto T_{\psi,\phi}=T^{-1}_{\phi,\psi}.
\end{equation*}

\begin{prop}
$G_{\mathcal{A}_{++}}$ is a groupoid, i.e. a time change, as defined above, induces a groupoid structure on $\mathfrak{S}_{\mathcal{A}_{++}}$.
\end{prop}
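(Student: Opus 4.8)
The plan is to recognise $G_{\mathcal{A}_{++}}$ as an instance of the standard transport (or \emph{banal}) groupoid attached to a family of isomorphisms into a common model object, here the trivialisations $T_\phi\colon\mathcal{S}\to\mathcal{S}_\phi$. The single non-formal ingredient is Proposition~\ref{time_change_IW}, which guarantees that each $T_\phi$ is a linear isomorphism, so that $T_\phi^{-1}$ exists and is itself an isomorphism. Once this is granted, the entire groupoid structure is forced by a cocycle identity for the arrows $T_{\phi,\psi}=T_\psi\circ T_\phi^{-1}$, and I would organise the verification around that identity.

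First I would check the category axioms. The key computation is the cocycle relation: for $\phi,\psi,\chi\in\mathcal{A}_{++}$,
\begin{equation*}
T_{\psi,\chi}\circ T_{\phi,\psi}=(T_\chi\circ T_\psi^{-1})\circ(T_\psi\circ T_\phi^{-1})=T_\chi\circ T_\phi^{-1}=T_{\phi,\chi},
\end{equation*}
where the middle step uses $T_\psi^{-1}\circ T_\psi=\id_{\mathcal{S}}$, valid precisely because $T_\psi$ is an isomorphism. This shows that composition of arrows is well defined, with the correct source and target, i.e. it lands in the prescribed singleton hom-set $G(\mathcal{S}_\phi,\mathcal{S}_\chi)$. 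Associativity is inherited directly from the associativity of composition of maps, since every arrow is an honest map (equivalently, one may iterate the cocycle relation). Taking $\psi=\phi$ gives $T_{\phi,\phi}=T_\phi\circ T_\phi^{-1}=\id_{\mathcal{S}_\phi}$, which is exactly the unique element of $G(\mathcal{S}_\phi,\mathcal{S}_\phi)$; the cocycle relation with one index repeated then yields the left and right unit laws. Hence $G_{\mathcal{A}_{++}}$ is a small category.

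Next I would verify that every arrow is invertible, which upgrades the category to a groupoid. For $T_{\phi,\psi}\in G(\mathcal{S}_\phi,\mathcal{S}_\psi)$ the candidate inverse is $T_{\psi,\phi}=\operatorname{inv}(T_{\phi,\psi})$, and the cocycle relation gives
\begin{equation*}
T_{\psi,\phi}\circ T_{\phi,\psi}=T_{\phi,\phi}=\id_{\mathcal{S}_\phi},\qquad T_{\phi,\psi}\circ T_{\psi,\phi}=T_{\psi,\psi}=\id_{\mathcal{S}_\psi}.
\end{equation*}
Thus $T_{\psi,\phi}=T_{\phi,\psi}^{-1}$, so $\operatorname{inv}$ is a genuine inversion map and every morphism is an isomorphism.

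The argument is almost entirely formal; the only point that must be \emph{checked} rather than manipulated symbolically is the cancellation $T_\psi^{-1}\circ T_\psi=\id_{\mathcal{S}}$, for which the bijectivity of $T_\psi$ supplied by Proposition~\ref{time_change_IW} is indispensable. I therefore do not expect a genuine obstacle beyond invoking that proposition: once each $T_\phi$ is known to be an isomorphism, $G_{\mathcal{A}_{++}}$ is the transitive groupoid with exactly one arrow between any two objects, namely the pair groupoid on the index family transported along the trivialisations $T_\phi$.
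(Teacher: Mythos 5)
Your proposal is correct and takes essentially the same route as the paper: the paper's proof likewise reduces everything to the observation that composition is well defined because $T_{\phi_2,\phi_3}\circ T_{\phi_1,\phi_2}=T_{\phi_3}\circ T_{\phi_1}^{-1}=T_{\phi_1,\phi_3}$ (your cocycle identity, resting on the invertibility of each $T_\phi$ from Proposition~\ref{time_change_IW}), with associativity and units holding automatically since one is composing linear operators, and inverses supplied by the map $\operatorname{inv}$ defined before the statement. You simply make the cancellation $T_\psi^{-1}\circ T_\psi=\id_{\mathcal{S}}$ and the two-sided inverse check explicit where the paper leaves them implicit.
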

\begin{rem}
In the appropriate framework we are in fact dealing with a Lie groupoid. 
\end{rem} 
\begin{proof}
We have to show that the composition of triples is well-defined. For three objects $\mathcal{S}_{\phi_i}\in\mathfrak{S}_{\mathcal{A}_{++}}$, $i\in\{1,2,3\}$, the function 
\begin{equation*}
\operatorname{comp}_{\phi_1,\phi_2,\phi_3}:G(\mathcal{S}_{\phi_2},\mathcal{S}_{\phi_3})\times G(\mathcal{S}_{\phi_1},\mathcal{S}_{\phi_2})\rightarrow G(\mathcal{S}_{\phi_1},\mathcal{S}_{\phi_3}),\quad (T_{\phi_2,\phi_3},T_{\phi_1,\phi_2})\mapsto T_{\phi_2,\phi_3}\circ T_{\phi_1,\phi_2}
\end{equation*}
is well-defined with~(\ref{time_change_mor}), also the associativity of the composition and  the unit property hold, as we are composing linear operators. 
\end{proof}
\subsection{The Girsanov operator}
For some of the measure theoretic details one should consult~[\cite{HT}, Section~5.2], which is also the main reference for this section. Additionally, we use~\cite{CdSW,D,W2}.

Let $(\Omega,\mathcal{F},\P,(\mathcal{F}_t)_{t\in\R_+})$ be an arbitrary filtered probability space. 
The ideal $\mathcal{N}\subset\mathcal{F}$ of $\P$-null sets is idempotent, i.e. $\mathcal{N}^2=\mathcal{N}$, and it gives rise to a short exact sequence
\[
\begin{xy}
  \xymatrix{
     0\ar[r]&\mathcal{N}\ar[r]^{\iota}&\mathcal{F} \ar[r]^p&\mathcal{F} /\mathcal{N}\ar[r]&0}
\end{xy}
\]
Let us recall the difference between an ``augmentation" and a ``completion". For a sub-$\sigma$-algebra $\mathcal{A}\subset\mathcal{F}$, the {\bf augmentation} of $\mathcal{A}$ by $\mathcal{N}$ is defined as the $\sigma$-algebra $\mathcal{A}^*:=\sigma(\mathcal{A}\cup\mathcal{N})$ which is equal to $\mathcal{A}+\mathcal{N}$ where $``+"$ is given by the symmetric difference of sets. The {\bf completion} of $\mathcal{F}$ is formed by adjoining all subsets of $\P$-null sets to $\mathcal{F}$ and then extending the measure to the resulting $\sigma$-algebra.

Finally, a filtration $(\mathcal{F}_t)_{t\in\R_+}$ is called {\bf complete} if $\mathcal{F}_0=\mathcal{F}^*_0$, i.e.  $\mathcal{N}\subset\mathcal{F}_0$, and hence also $\mathcal{F}_t=\mathcal{F}^*_t$.

Here we need the concept of a {\bf local completion}, cf.~[\cite{HT}, p.~250], which consists of adjoining certain families of subsets to the original $\sigma$-algebra and then extending the measure.

We recall from~[\cite{HT}, Section~5.2] the general framework:  let $(\Omega,\mathcal{F},\P,(\mathcal{F})_{t\in\R_+})$ be a filtered probability space and $D\geq0$, a martingale  such that $\int_{\Omega} D_0(\omega) d\P(\omega)=1$. This defines a {\bf projective family} of probability measures $Q_*$, with $Q_t:=D_t\cdot\P$ on $\mathcal{F}_t$ and $Q_s=Q_t|\mathcal{F}_s$ for $s\leq t$. Assuming $\sigma$-additivity of $Q_*$ and local completeness of the filtration, we shall deal with the following situation only: we consider mutually absolutely continuous probability measures 
\begin{equation*}
Q\sim_{\operatorname{loc}}\P\quad\text{with martingale density $D_{Q\P}>0$},
\end{equation*}
such that $Q_t\sim\P_t$ for all $t\in\R_+$. 

Let $\operatorname{Prob}(\mathcal{F})$ denote the convex set of probability measures on $\mathcal{F}$, and let 
\begin{equation*}
[\P]_{\operatorname{loc}}:=[\P,(\mathcal{F}_t)_{t\in\R_+}]_{\operatorname{loc}}:=\{Q\in\operatorname{Prob}(\mathcal{F})~|~\text{$Q\sim_{\operatorname{loc}}\P$ with martingale density $D_{Q\P}>0$}\}
\end{equation*}
For $Q\in[\P,(\mathcal{F}_t)_{t\in\R_+}]_{\operatorname{loc}}$, define the {\bf Girsanov operator} $\operatorname{G}$, associated to the change of measure $\P\rightarrow Q$ with density $D=D_{Q\P}$, as:
\begin{equation*}
 \operatorname{G}_{Q,\P}:\mathcal{S}\rightarrow\mathcal{S},\quad X\mapsto X-\frac{1}{D}\bullet[X,D]~,
\end{equation*}
which is an $\R$-linear isomorphism and satisfies $\operatorname{G}|_{\mathcal{A}}=\id_{\mathcal{A}}$. 

\begin{prop}
For every $Q\in[\P,(\mathcal{F}_t)_{t\in\R_+}]_{\operatorname{loc}}$, the Girsanov operator defines an automorphism of the $\mathcal{S}_I$-module $\mathcal{S}_0$, i.e. for $F\in\mathcal{S}_I$, $X\in\mathcal{S}_0$, we have 
\begin{equation*}
\operatorname{G}_{Q,\P}(F\bullet X)=F\bullet\operatorname{G}_{Q,\P}(X).
\end{equation*}
\end{prop}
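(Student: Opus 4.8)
The plan is to verify the asserted identity by a direct computation, expanding the definition of the Girsanov operator on each side and reducing the equality to the two structural properties of the It\^o integral recalled just before Proposition~\ref{h-associative}, namely the module-associativity $F\bullet(G\bullet X)=(F\cdot G)\bullet X$ and the covariation identity $F\bullet[X,Y]=[F\bullet X,Y]$.

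First I would check that everything is well defined on the spaces involved. Since $D>0$ is a continuous semimartingale, applying It\^o's formula to the function $x\mapsto 1/x$ on $(0,\infty)$ shows that $1/D\in\mathcal{S}_I$; this is the only place the positivity hypothesis $D>0$ enters. As $\bullet:\mathcal{S}\otimes_{\R}\mathcal{S}\to\mathcal{S}_0$ and $[~,~]$ takes values in $\mathcal{A}_0\subset\mathcal{S}_0$, the correction term $\frac{1}{D}\bullet[X,D]$ lies in $\mathcal{S}_0$, so $\operatorname{G}_{Q,\P}$ indeed sends $\mathcal{S}_0$ into $\mathcal{S}_0$; moreover $F\bullet X\in\mathcal{S}_0$ for $F\in\mathcal{S}_I$, $X\in\mathcal{S}_0$, so both sides of the claimed equation are meaningful.

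Next I would expand both sides. The left-hand side is
\[
\operatorname{G}_{Q,\P}(F\bullet X)=F\bullet X-\frac{1}{D}\bullet[F\bullet X,D],
\]
and the right-hand side is
\[
F\bullet\operatorname{G}_{Q,\P}(X)=F\bullet X-F\bullet\Bigl(\frac{1}{D}\bullet[X,D]\Bigr).
\]
Cancelling the common term $F\bullet X$ reduces the statement to the single identity
\[
\frac{1}{D}\bullet[F\bullet X,D]=F\bullet\Bigl(\frac{1}{D}\bullet[X,D]\Bigr).
\]

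For the last step I would rewrite the left-hand side using $[F\bullet X,D]=F\bullet[X,D]$ as $\frac{1}{D}\bullet\bigl(F\bullet[X,D]\bigr)$, and then apply module-associativity to both members: the left becomes $\bigl(\tfrac{1}{D}\cdot F\bigr)\bullet[X,D]$ and the right becomes $\bigl(F\cdot\tfrac{1}{D}\bigr)\bullet[X,D]$. These coincide because the point-wise product of $\mathcal{S}_I$ is commutative by Theorem~\ref{Ito_Thm}, which finishes the proof. I do not expect a genuine obstacle: the result is formal once the two structural identities are invoked, and the only subtlety worth flagging is the membership $1/D\in\mathcal{S}_I$, for which positivity of the density is essential.
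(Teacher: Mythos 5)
Your proof is correct, and it takes a genuinely more self-contained route than the paper. The paper's own proof is a one-line citation: it invokes the Korollar on p.~256 of Hackenbroch--Thalmaier together with Theorem~\ref{Ito_Thm}, and leaves the computation to the reference. You instead expand $\operatorname{G}_{Q,\P}$ on both sides, cancel $F\bullet X$, and reduce the claim to
\[
\frac{1}{D}\bullet[F\bullet X,D]=F\bullet\Bigl(\frac{1}{D}\bullet[X,D]\Bigr),
\]
which you settle using exactly the two structural identities the paper records before Proposition~\ref{h-associative}, namely $F\bullet(G\bullet X)=(F\cdot G)\bullet X$ and $[F\bullet X,Y]=F\bullet[X,Y]$, plus commutativity of the point-wise product from Theorem~\ref{Ito_Thm}. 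This is a legitimate derivation: $[X,D]\in\mathcal{A}_0\subset\mathcal{S}$, so both identities apply, and your attention to the membership $1/D\in\mathcal{S}_I$ (via It\^o's formula on $(0,\infty)$, using $D>0$ and path continuity to keep $D$ locally bounded away from zero) is precisely the point that the paper's citation sweeps under the rug. What your approach buys is transparency about which algebraic facts drive the result --- it shows the proposition is formal given the module structure and the covariation rule; what the paper's approach buys is brevity and deference to a reference where the analytic details (localization, well-definedness of the stochastic integrals involved) are already handled.
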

\begin{proof}
This follows from~[\cite{HT}, p.~256 Korollar] and Theorem~\ref{Ito_Thm}. 
\end{proof}

For $Q\in[\P,(\mathcal{F}_t)_{t\in\R_+}]_{\operatorname{loc}}$, let $\mathcal{S}_Q$ denote the corresponding $\R$-vector space of continuous semimartingales.

Let
\begin{equation*}
\mathfrak{S}_{[\P]_{\operatorname{loc}}}:=\mathfrak{S}_{[\P,(\mathcal{F}_t)_{t\in\R_+}]_{\operatorname{loc}}}:=\{\mathcal{S}_Q~|~Q\in[\P,(\mathcal{F}_t)_{t\in\R_+}]_{\operatorname{loc}}\}
\end{equation*}
be the disjoint union of the $\mathcal{S}_{Q}$. This set has the structure of an infinite dimensional fibre bundle with base $[\P]_{\operatorname{loc}}$. The fibre-wise Doob-Meyer decomposition defines two transversal sub-bundles $\mathfrak{M}_0$ and $\mathfrak{A}$, 
and the Girsanov operator respects this splitting, i.e. $\operatorname{G}(\mathcal{M}_{\P})=\mathcal{M}_Q$ and $G(\mathcal{A}_{\P})=\mathcal{A}_Q (=\mathcal{A}_{\P})$.

Therefore, the Girsanov operator  can be perceived either as an $\R$-linear automorphism of $\mathcal{S}$ or as a linear isomorphism of two, a priori different,  spaces $\mathcal{S}$ and $\mathcal{S}_{Q}$. 

Let us define two small categories.
The category $G_{[\P]_{\operatorname{loc}}}$ has as set of objects $[\P]_{\operatorname{loc}}$,  and as morphisms $G_{[\P]_{\operatorname{loc}}}(Q_1,Q_2):=\{D_{Q_1,Q_2}:=D_{Q_1,\P}\cdot D^{-1}_{Q_2,\P}\}$, for $Q_1,Q_2\in [\P]_{\operatorname{loc}}$ and hence $G(Q,Q)=\{1\}$. 

Further, for pairs of objects $Q_1,Q_2$, we have a map $\operatorname{inv}: G_{Q_1,Q_2}\rightarrow G_{Q_2,Q_1}$, $D_{Q_1,Q_2}\mapsto D_{Q_2,Q_1}$, and for triples of objects $Q_i$, $i\in\{1,2,3\}$, a function
$\operatorname{comp}_{Q_1,Q_2,Q_3}:G_{[\P]_{\operatorname{loc}}}(Q_2,Q_3)\times G_{[\P]_{\operatorname{loc}}}(Q_1,Q_2)\rightarrow G_{[\P]_{\operatorname{loc}}}(Q_1,Q_3)$, $(D_{Q_2,Q_3}, D_{Q_1,Q_2})\mapsto D_{Q_1,Q_3}$.

The category $G_{\mathfrak{S}_{[\P]_{\operatorname{loc}}}}$ has as set of objects $\mathfrak{S}_{[\P]_{\operatorname{loc}}}$, and as morphisms $G_{\mathfrak{S}_{[\P]_{\operatorname{loc}}}}(\mathcal{S}_{Q_1},\mathcal{S}_{Q_2}):=\{\operatorname{G}_{Q_1,Q_2}\}$, i.e. the respective Girsanov operator, and so $G_{\mathfrak{S}_{[\P]_{\operatorname{loc}}}}(\mathcal{S}_{Q},\mathcal{S}_{Q})=\{\id_{\mathcal{S}_Q}\}$.

The function $\operatorname{inv}:G(\mathcal{S}_Q,\mathcal{S}_P)\rightarrow G(\mathcal{S}_P,\mathcal{S}_Q)$ is given by $\operatorname{G}_{Q,P}\mapsto\operatorname{G}_{Q,P}^{-1}$, and a composition function 

$\operatorname{comp}_{\mathcal{S}_{Q_1},\mathcal{S}_{Q_2},\mathcal{S}_{Q_3}}:G_{[\P]_{\operatorname{loc}}}(\mathcal{S}_{Q_2},\mathcal{S}_{Q_3})\times G_{[\P]_{\operatorname{loc}}}(\mathcal{S}_{Q_1},\mathcal{S}_{Q_2})\rightarrow G_{[\P]_{\operatorname{loc}}}(\mathcal{S}_{Q_1},\mathcal{S}_{Q_3})$, $(\operatorname{G}_{Q_2,Q_3}, \operatorname{G}_{Q_1,Q_2})\mapsto \operatorname{G}_{Q_1,Q_3}$. 

\begin{thm} 
$G_{[\P,(\mathcal{F}_t)_{t\in\R_+}]_{\operatorname{loc}}}$  and $G_{\mathfrak{S}_{[\P,(\mathcal{F}_t)_{t\in\R_+}]_{\operatorname{loc}}}}$ are isomorphic infinite-dimensional Lie groupoids.
\end{thm}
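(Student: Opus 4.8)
The plan is to exhibit an explicit isomorphism functor $\Phi$ and then upgrade it to the smooth category. On objects set $\Phi_0(Q):=\mathcal{S}_Q$, which is a bijection of $[\P]_{\operatorname{loc}}$ onto the indexing set $\{\mathcal{S}_Q\}$ of fibres by the very definition of $\mathfrak{S}_{[\P]_{\operatorname{loc}}}$; on morphisms set $\Phi_1(D_{Q_1,Q_2}):=\operatorname{G}_{Q_1,Q_2}$. Both groupoids are \emph{indiscrete} (pair) groupoids over $[\P]_{\operatorname{loc}}$: every hom-set $G_{[\P]_{\operatorname{loc}}}(Q_1,Q_2)$ and $G_{\mathfrak{S}_{[\P]_{\operatorname{loc}}}}(\mathcal{S}_{Q_1},\mathcal{S}_{Q_2})$ is a singleton, so a morphism is nothing but its (source, target) pair, decorated by the density ratio respectively the Girsanov operator. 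Consequently, once $\Phi_0$ is a bijection and $\Phi$ is a functor, $\Phi$ is automatically an isomorphism of groupoids. Functoriality in turn reduces to the cocycle identities, which hold tautologically from the coboundary definitions $D_{Q_1,Q_2}=D_{Q_1,\P}\,D_{Q_2,\P}^{-1}$ and $\operatorname{G}_{Q_1,Q_2}=\operatorname{G}_{Q_2,\P}\circ\operatorname{G}_{Q_1,\P}^{-1}$: indeed $\operatorname{G}_{Q_2,Q_3}\circ\operatorname{G}_{Q_1,Q_2}=\operatorname{G}_{Q_1,Q_3}$ and $\operatorname{G}_{Q,Q}=\id_{\mathcal{S}_Q}$ mirror the corresponding products of densities, so $\Phi$ preserves composition, units and inverses.

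It remains to promote $\Phi$ to an isomorphism of Lie groupoids. First I would fix the infinite-dimensional smooth structure on the common object space $[\P]_{\operatorname{loc}}$ by the stochastic-exponential parametrisation $Q\leftrightarrow M$, where $D_{Q,\P}=\mathcal{E}(M)$ with $M\in\mathcal{M}_0$ a continuous local martingale; this identifies $[\P]_{\operatorname{loc}}$ with a family of positive martingale densities inside the vector space $\mathcal{M}_0$ and so fixes the smooth structure on the object space. Under this identification the base of the bundle $\mathfrak{S}_{[\P]_{\operatorname{loc}}}$ is exactly $[\P]_{\operatorname{loc}}$, so after the fibre-indexing the object map $\Phi_0\colon Q\mapsto\mathcal{S}_Q$ is the identity and in particular a diffeomorphism of object manifolds. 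Both groupoids are then the pair groupoid $\operatorname{Pair}([\P]_{\operatorname{loc}})=[\P]_{\operatorname{loc}}\times[\P]_{\operatorname{loc}}\rightrightarrows[\P]_{\operatorname{loc}}$, whose source, target, unit, inverse and composition are the canonical structure maps of a pair groupoid.

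The substantive point is that the two decorations of the morphism space by $D_{Q_1,Q_2}$ and by $\operatorname{G}_{Q_1,Q_2}$ are smoothly compatible, i.e.\ that $\Phi_1$ is a diffeomorphism for the natural smooth structures. Writing $D=\mathcal{E}(M)$ and using $[X,D]=D\bullet[X,M]$ together with the $\mathcal{S}_I$-module relation $F\bullet(G\bullet Y)=(F\cdot G)\bullet Y$ of Theorem~\ref{Ito_Thm}, one obtains the closed form $\operatorname{G}_{Q,\P}(X)=X-\tfrac1D\bullet[X,D]=X-[X,M]$, which is jointly continuous and bilinear in $(X,M)$. Hence $(Q_1,Q_2)\mapsto\operatorname{G}_{Q_1,Q_2}$ depends smoothly on the pair through the logarithm of the density ratio, and by Girsanov's theorem this assignment is a bijection onto its image with smooth inverse: the drift adjustment $\operatorname{G}_{Q_1,Q_2}$ determines, and is determined by, the density ratio $D_{Q_1,Q_2}$. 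Thus the two morphism manifolds are identified with $[\P]_{\operatorname{loc}}\times[\P]_{\operatorname{loc}}$ in a way compatible with $\Phi_1$, and $\Phi$ is an isomorphism of Lie groupoids.

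The main obstacle is the infinite-dimensional differential geometry rather than the algebra. One must specify a category of infinite-dimensional manifolds (Fr\'echet, convenient, or diffeological) in which $\mathcal{M}_0$, $\mathcal{S}$ and the operator spaces $\operatorname{Iso}(\mathcal{S})$ are smooth manifolds; verify that the stochastic exponential $M\mapsto\mathcal{E}(M)$ and the Girsanov map $M\mapsto\operatorname{G}$ are smooth there; and check that source and target are submersions and composition is smooth, so that $\operatorname{Pair}([\P]_{\operatorname{loc}})$ genuinely is a Lie groupoid in that framework. Granting the analytic prerequisites—continuity of $\bullet$ and $[\,,\,]$ and smoothness of $\mathcal{E}$—the smoothness of all structure maps and of $\Phi$ follows from their explicit bilinear and multiplicative form, while the algebraic content, namely bijectivity and functoriality, is immediate from the pair-groupoid description.
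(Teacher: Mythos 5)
A point of reference first: the paper states this theorem with \emph{no proof at all} (it is the last statement of the section, followed directly by the acknowledgments), so your proposal is not competing with an argument in the text --- it is supplying one. On the level of abstract groupoids your argument is correct, and it is essentially forced by the paper's definitions: both categories have singleton hom-sets, units and inverses are declared, and the composition maps are \emph{defined} to output the correct element, so any functor bijective on objects is automatically an isomorphism. Your bijectivity of $Q\mapsto\mathcal{S}_Q$ does rely, as you say, on $\mathfrak{S}_{[\P]_{\operatorname{loc}}}$ being the \emph{disjoint} union: this is not a pedantic point, since by Girsanov's theorem the underlying sets $\mathcal{S}_Q$, $Q\sim_{\operatorname{loc}}\P$, all coincide, and without the indexing the object map would be maximally non-injective. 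The genuine added value of your write-up is the closed form $\operatorname{G}_{Q,\P}(X)=X-[X,M]$ with $D_{Q,\P}=\mathcal{E}(M)$, obtained from $[X,D]=D\bullet[X,M]$ and the $\mathcal{S}_I$-module identities: this shows that the intrinsically defined Girsanov operators really do satisfy $\operatorname{G}_{Q_2,Q_3}\circ\operatorname{G}_{Q_1,Q_2}=\operatorname{G}_{Q_1,Q_3}$ and $\operatorname{G}_{Q,Q}=\id_{\mathcal{S}_Q}$, rather than having these decreed by fiat, which is all the paper's definition of $\operatorname{comp}$ does. That computation (together with the observation that $X\mapsto[X,M]$ determines $M$, e.g.\ by taking $X=M_1-M_2$) is the honest mathematical content behind the functoriality and behind your claim that $\operatorname{G}_{Q_1,Q_2}$ and $D_{Q_1,Q_2}$ determine each other.

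The gap --- in your proposal and in the paper alike --- is the word ``Lie''. You correctly list what is missing: a choice of category of infinite-dimensional smooth spaces, an actual manifold structure on $[\P]_{\operatorname{loc}}$ and on the arrow spaces, and the verification that source, target, composition and $\Phi$ are smooth with source/target submersions. But you then defer all of it, so what you have proven is an isomorphism of abstract (set-theoretic) groupoids, not of Lie groupoids. Moreover, your proposed chart is weaker than you suggest: the stochastic exponential $M\mapsto\mathcal{E}(M)$ does \emph{not} identify $[\P]_{\operatorname{loc}}$ with (an open subset of) $\mathcal{M}_0$, because $\mathcal{E}(M)$ must be a strictly positive \emph{true} martingale whose projective family of measures is $\sigma$-additive --- a Novikov-type integrability constraint that cuts out a subset of $\mathcal{M}_0$ which is not obviously open, closed, or a submanifold in any natural topology on $\mathcal{M}_0$. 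So even the object ``manifold'' is not actually pinned down by your argument. Since the paper supplies neither a smooth structure nor a proof, you cannot be faulted relative to it; but judged against the statement of the theorem, the differential-geometric half remains open in your proposal exactly as it does in the paper.
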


\subsection*{Acknowledgments} 
The author thanks Owen Gwilliam for numerous inspiring and helpful discussions which also influenced this article. He is grateful to Gabriel Drummond-Cole for bringing Zinbiel algebras to his attention and suggesting to look at Loday's work. Further, he thanks the MPI in Bonn for its hospitality. Finally, he thanks Roland Speicher for his support and interest.


\begin{thebibliography}{}
\bibitem{CdSW} A. Cannas da Silva $\&$ A. Weinstein, {\it Geometric Models for Noncommutative Algebras}, Berkeley Mathematics Lecture Notes, Vol.~{\bf 10}, AMS, 1999
\bibitem{D} Th. Deck, {\it Der Itô-Kalkül. Einführung und Anwendungen},  Springer Verlag Berlin Heidelberg 2006
\bibitem{Du} G. Dupont, {\it Éléments de déformations algébriques}, Rapports de recherche, Université de Sherbrooke,  29 septembre 2010
\bibitem{E} J. Elstrodt, {\it Maß-und Integrationstheorie}, 7. korrigierte und aktualisierte Auflage, Springer (2011)
\bibitem{Fr} D.H. Fremlin, {\it Chapter~61. The Riemann-sum integral}. Version of 10.6.12/6.6.14,\\ \url{https://www.essex.ac.uk/maths/people/fremlin/chap61.pdf}
\bibitem{F} R. Friedrich, in preparation.
\bibitem{G} J. Gaines, {\it The algebra of iterated stochastic integrals}, Stochastics and Stochastics Reports, Vol. 49, pp. 169-179, (1994)
\bibitem{HT} W. Hackenbroch $\&$ A. Thalmaier, {\it Stochastische Analysis}, B.G.~Teubner  Stuttgart 1994
\bibitem{IW} N. Ikeda  $\&$ S. Watanabe. {\it Stochastic Differential Equations and Diffusion Processes}. 2nd Edition. Amsterdam: North Holland Publ. Comp. 1989.
\bibitem{I} K. Itô, {\it Stochastic differentials}, Appl. Math. Opt., 1,  374-381, (1975).
\bibitem{KP} P. Kloeden $\&$ E. Platen, {\it Numerical Solution of Stochastic Differential Equations}, Corrected 3rd Printing, Springer 1999
\bibitem{L95} J.-L. Loday, {\it Cup-product for Leibniz cohomology and dual Leibniz algebras}, Math. Scand. 77, no. 2, 189–196, (1995).
\bibitem{L07} J.-L. Loday, {\it On the algebra of quasi-shuffles}, Manuscripta Math. 123, no. 1, 79-93, (2007) 
\bibitem{LR04} J.-L. Loday $\&$ M. Ronco, {\it Trialgebras and families of polytopes}, Homotopy theory: relations with algebraic geometry, group cohomology, and algebraic $K$-theory, Contemp. Math., vol. {\bf 346}, Amer. Math. Soc., Providence, RI, pp. 369-398,  (2004).
\bibitem{LV} J.-L. Loday $\&$ B. Valette, {\it Algebraic Operads}, Springer (2012)
\bibitem{W} Wikipedia, \url{https://en.wikipedia.org/wiki/Category_of_topological_spaces}
\bibitem{W2} Wikipedia, \url{https://en.wikipedia.org/wiki/Groupoid}
\bibitem{Y} D. Yau, {\it Deformation Theory of Modules}, Communications in Algebra, {\bf 33}: 2351–2359, 2005
\end{thebibliography}
\end{document}